\declaretheorem[name=Theorem]{thm}
\declaretheorem[name=Definition, numberlike=thm]{defin}
\declaretheorem[name=Proposition, numberlike=thm]{prop}
\declaretheorem[name=Lemma, numberlike=thm]{lem}
\declaretheorem[name=Corollary, numberlike=thm]{cor}
\declaretheorem[name=Remark, numberlike=thm]{rem}
\declaretheorem[name=Claim]{claim}
\declaretheorem[name=Condition]{cond}
\newcommand\cG{{\mathcal G}}
\newcommand\cH{{\mathcal H}}
\newcommand\cJ{{\mathcal J}}
\newcommand\cM{{\mathcal M}}
\newcommand\cU{{\mathcal U}}
\newcommand\sL{{\mathscr L}}
\newcommand\sC{{\mathscr C}}
\newcommand\sK{{\mathscr K}}
\newcommand\bC{{\mathbb C}}
\newcommand\bN{{\mathbb N}}
\newcommand\bR{{\mathbb R}}
\newcommand\bT{{\mathbb T}}
\newcommand\bZ{{\mathbb Z}}
\newcommand\B{{\mathbf B}}
\renewcommand\L{{\mathbf L}}
\newcommand\ve{\varepsilon}
\newcommand\mixnorm{\sC^\alpha \to \L^1}
\newcommand{\norm}[1]{\left\lVert{#1}\right\rVert}
\newcommand{\abs}[1]{\left\lvert{#1}\right\rvert}
\newcommand\Id{{\mathds{1}}}
\newcommand\ntrans{{n_1}}
\newcommand\ovl{{\Delta}}
\numberwithin{equation}{section}
\author{Peyman Eslami}
\address{Peyman Eslami\\
Dipartimento di Matematica\\
II Universit\`{a} di Roma (Tor Vergata)\\
Via della Ricerca Scientifica, 00133 Roma, Italy.}
\email{{\tt eslami@mat.uniroma2.it}}
\title[Skew products with discontinuities]{Stretched-exponential mixing for $\sC^{1+\alpha}$ skew products with discontinuities}
\keywords{Decay of Correlations, Skew-Product, Partial Hyperbolicity, Standard Pairs, Oscillatory Cancellation}
\subjclass[2000]{Primary: 37A25;   Secondary: 37D50}
\thanks{It is my pleasure to thank Carlangelo Liverani and Oliver Butterley for their guidance and support in the preparation of this article. Without them this work would not have been possible. Research supported by INdAM-COFUND Marie Curie Fellowship.}
\date{  \today}
\begin{document}
\begin{abstract}
Consider the skew product $F:\bT^2 \to \bT^2$, $F(x,y)= (f(x),y+\tau(x))$, where $f:\bT^1\to \bT^1$ is a piecewise $\sC^{1+\alpha}$ expanding map on a countable partition and $\tau:\bT^1 \to \bR$ is piecewise $\sC^1$. It is shown that if $\tau$ is not Lipschitz-cohomologous to a piecewise constant function on the joint partition of $\tau$ and $f$, then $F$ is mixing at a stretched-exponential rate.
\end{abstract}

\maketitle
\bibliographystyle{abbrv}

\section{Introduction} \label{intro}
An important problem in the statistical study of chaotic dynamical systems is obtaining a quantitative estimate on the rate of decay of correlations of the system. Such an estimate describes how fast the system looses memory of its past and opens the door to further statistical description of the system. Ideally, one would like to prove an exponential rate of mixing for systems with ``enough'' hyperbolicity. 

The first results on exponential decay of correlations were obtained for uniformly expanding maps and hyperbolic maps (see \cite{Liv95} and references therein). Slower rates of mixing were also obtained for non-uniformly hyperbolic maps \cite{You98,You99,Sar02}.

For flows most of the existing results on exponential decay of correlations pertain to smooth systems or those with a Markov structure (see \cite{Dol98,Liv04,BalVal05,AviGouYoc06} and references therein). For systems with singularities, Chernov  \cite{Che07} obtained a stretched-exponential rate of decay for certain Billiard flows, Baladi and Liverani \cite{BalLiv12} for piecewise cone-hyperbolic contact flows, while Obayashi  \cite{Oba09} obtained exponential decay of correlations for suspension semiflows over piecewise expanding $\sC^2$ maps of the interval using a tower construction and applying the main result of \cite{BalVal05}.

The goal of this article is to introduce a method by which rates of decay of correlations can be obtained for systems with a neutral direction that have discontinuities and are of low regularity (without assuming the existence of a Markov structure). Such systems appear in practice and are of physical relevance. Indeed, the flow of the Lorenz system of ordinary differential equations (see \cite{But14})  and Billiard flows are examples of such systems. Our motivation is to put forward a method to eventually prove exponential mixing rates for these systems; however, in this article we consider the simplest case -- that of a skew product with a neutral direction.  Also, we will prove only a stretched-exponential bound; however, with more delicate estimates one should be able to obtain an exponential bound. We will illustrate the method by considering a 2D skew-product map with an expanding piecewise $\sC^{1+\alpha}$ map in the base and a neutral direction on which the map is a rigid rotation. The method proposed here is a combination of the point of view of standard pairs due to Dolgopyat \cite{Dol04} and further developed by Chernov \cite{CheMar06,CheDol08,CheDol09}, and the oscillatory cancelation mechanism due to Dolgopyat \cite{Dol98}. 

The skew product is introduced in \Cref{setting}. In \Cref{corr} the main theorem is introduced and proven assuming a crucial estimate. The rest of the article is devoted to the proof of this estimate. \Cref{iter} introduces the terminology of standard pairs and standard families. In section \Cref{trans} we introduce the notion of transversality.  \Cref{modi} shows how one can use the  oscillatory cancelation mechanism of Dolgopyat to modify standard families. Finally in \Cref{dens_decay} the main estimate is proven.

\section{The setting} \label{setting} 
Consider the skew-product $F:\bT^{2}\to \bT^{2}$ defined by 
\begin{equation} \label{eq:skewprod}
F:(x,y) \mapsto \left( f(x), y+\tau(x)   \right).
\end{equation}

Assume that 
\begin{equation}\label{eq:C1+}
f:\bT^1 \to \bT^1 \text{ is piecewise } \sC^{1+\alpha}.
\end{equation}
That is, $\bT^1$ can be partitioned into countably many open intervals (modulo a countable set of endpoints of the intervals) such that each open interval is a maximal interval of monotonicity of $f$  and $f$ is $\sC^{1+\alpha}$ on the open interval, extendable to the closed interval. Note that, for every $n \geq 1$, $f^n$ is also piecewise $\sC^{1+\alpha}$.   Having the graph of $f^n$ in mind, we denote by $\cH^n$ the set of inverse branches of $f^n$. We choose to index partition elements of $f^n$ by elements of $\cH^n$. So, we denote the partition of $f^n$ by $\{O_h\}_{h \in \cH^n}$. Note that the domain and range of $h$ are $f^n(O_h)$ and $O_h$, respectively.

Assume that $f$ is expanding. That is, there exist $\lambda$ such that
\begin{equation} \label{eq:expansion}
 \ln 2 < \lambda \text{ and } e^{\lambda n} \leq \abs{(f^n)'} .
\end{equation}

Assume that $f$ satisfies the following distortion bound. There exists a constant $D \geq 0$ such that for every $n \in \bN$
\begin{equation}\label{eq:dist}
\frac{\abs{h'(x)}}{\abs{h'(y)}} \leq e^{D\abs{x-y}^\alpha} \text{, for } h \in \cH^n\text{, and } x,y \in f^n(O_h).
\end{equation}
Note that this condition is implied by a similar condition for the first iterate of $f$, possibly with slightly worse constants. 

Also, assume that, for every $n \in \bN$,
\begin{equation} \label{eq:summability}
\sum_{h \in \cH^n} \sup_{f^n(O_h)} \abs{h'}< \infty.
\end{equation}
This condition is trivially satisfied when $f$ has finitely many branches. This condition is also implied by the similar condition for the first iterate. Indeed, if $\sum_{h \in \cH} \sup_{f(O_h)} \abs{h'} \leq M < \infty$, then $\sum_{h \in \cH^n} \sup_{f^n(O_h)} \abs{h'}\leq M^n <\infty$.

Assume also that $f$ is covering.
\footnote{If $f$ is an expanding piecewise $\sC^2$ map on a finite partition, then covering and mixing are equivalent, see \cite{HofKel82_1,HofKel82_2}.}
That is, 
\begin{equation} \label{eq:covering}
\forall n \in \bN, \exists N(n) \text{ such that } \forall h \in \cH^n, f^{N(n)}\bar O_h = \bT^1,
\end{equation}
where $\bar O_h$ denotes the closure of the open interval $O_h$.

Assume that
\begin{equation}
\tau:\bT^1 \to \bR \text{ is piecewise } \sC^1.
\end{equation}

Assume that $\tau$ is not Lipschitz-cohomologous to a piecewise constant function. That is, 
\begin{equation} \label{eq:ncoh_pwc}\nexists \phi \in Lip (\bT^1, \bR) \text{ such that }  \tau(x) = \phi \circ f(x) - \phi(x) + \psi(x),\end{equation}where $\psi(x)$ is a piecewise constant function on the joint partition of $f$ and $\tau$.

For every $n \in \bN$, define $\tau_n := \sum_{j=0}^{n-1} \tau \circ f^j$. Assume that there exists $C_\tau$ such that for every $n \in \bN$
\begin{equation} \label{eq:roof_regularity}
\abs{(\tau_n \circ h)'} \leq C_\tau \text{, for } h \in \cH^n.
\end{equation}
This condition is easily satisfied if $|\tau'|$ is bounded. Note that in more general cases mentioned earlier, e.g. the case of the Lorenz flow, this condition is not satisfied. For further details see \cite{But14}.

\textbf{Banach space assumptions.}
Let $\norm{\cdot}_{\sC^\alpha}= \abs{\cdot}_\alpha + \norm{\cdot}_{\sC^0}$, where $\abs{\cdot}_\alpha$ is the usual H\"older semi-norm. Suppose there exists a Banach space $\B \subset \L^{1}$ such that the following hold. 
\begin{enumerate}
\item For every $g \in \B$, $\norm{g}_{\L^1} \leq \norm{g}_{\B}$. For every $g$ in $\sC^\alpha$,  $\norm{g}_{\B} \leq \norm{g}_{\sC^{\alpha}}$. 
\item For every $b$, the weighted transfer operator  $\sL_b:\B \to \B$ associated to $f$ (see \eqref{eq:1dim_Lb}) with weight $\xi(x) = e^{ib\tau(x)}$ is bounded, has a spectral radius $\leq 1$ and has essential spectral radius strictly $< 1$.
\item It is possible to approximate $g \in \B$ with $g_\ve \in \sC^\alpha$  such that $\norm{g-g_\ve}_{\L^1} \leq \norm{g}_\B \ve$ and $\norm{g_\ve}_{\sC^\alpha}< \norm{g}_\B \ve^{-(1+\alpha^{-1})}$.
\end{enumerate}

Under the assumptions \eqref{eq:C1+}--\eqref{eq:summability}, it is known that the Banach space of functions of generalized bounded variation  (see \cite[Section 4.2]{But14}) satisfies the above assumptions. 

Suppose that $f$ preserves an absolutely continuous measure $\mu$ with a density $\wp \in \B$ and $(f,\mu)$ is mixing. It can be easily shown that $F$ preserves the absolutely continuous measure $\nu = \mu \times m$, where $m$ is the Lebesgue measure. We will also denote the density of $\nu$ by $\wp$ since it is constant in the vertical direction. The objective of this note is to prove a stretched-exponential decay of correlations for the skew product $F$. Of course, such an estimate implies $(F, \nu)$ is mixing.
\section{Decay of correlations} \label{corr}
For two observables $\phi$ and $\psi$ the correlation coefficients are defined by
\[
\operatorname{cor}_{\phi,\psi}(n) = \int_{\bT^2} \phi \cdot \psi \circ F^n \ d\nu- \int_{\bT^2} \phi \ d\nu\int_{\bT^2} \psi \ d\nu.
\]

Let $\sL:\L^1(\bT^2) \to \L^1(\bT^2)$ be the transfer operator associated to the skew product $F$. That is,
\begin{equation}\label{eq:2dim_L}
\sL g (x,y) =\sum_{z\in F^{-1}(x,y)} g(z)|\det DF^{-1}(z)| = \sum_{w \in f^{-1}(x)}\frac{1}{\abs{f'(w)}}g(w,y-\tau(w)).
\end{equation}

\begin{thm} Suppose the skew product $F$ satisfies assumptions \eqref{eq:skewprod}--\eqref{eq:roof_regularity}. Then, there exist constants $\gamma_3 > 0$  and $C$ such that for every $\phi \in \sC^{\alpha}(\bT^2)$, and $\psi \in \L^\infty(\bT^2)$,
\begin{equation}
\abs{ \operatorname{cor}_{\phi,\psi}(n)} \leq Ce^{-\gamma_3\sqrt{n}}\norm{\phi}_{\sC^\alpha} \norm{\psi}_{\L^\infty} .
\end{equation}
\end{thm}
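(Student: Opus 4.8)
The strategy is the standard reduction of decay of correlations to a decay estimate for the twisted transfer operators $\sL_b$ on the base, followed by Fourier decomposition in the neutral variable. First I would reduce to observables of the form $\phi(x,y)$, $\psi(x,y)$ whose integrals against $\nu$ vanish, and expand both in Fourier series in $y$: writing $\phi(x,y)=\sum_{b\in\bZ}\phi_b(x)e^{iby}$ and similarly for $\psi$, one checks from the explicit form of $\sL$ in \eqref{eq:2dim_L} that the $y$-Fourier modes do not mix, so that
\begin{equation*}
\operatorname{cor}_{\phi,\psi}(n)=\sum_{b\in\bZ}\int_{\bT^1}\sL_b^n(\wp\,\bar\phi_b)\,\overline{\psi_b}\,dx-(\text{mean terms}),
\end{equation*}
where $\sL_b$ is the weighted transfer operator of $f$ with weight $e^{ib\tau}$. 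The $b=0$ term is controlled by the assumed mixing of $(f,\mu)$ together with the spectral gap of $\sL_0$ on $\B$, giving exponential (hence stretched-exponential) decay; the work is in the modes $b\neq 0$.

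For $b\neq 0$ the key input is a uniform (in $b$) estimate of the shape $\norm{\sL_b^n}_{\sC^\alpha\to\L^1}\leq C e^{-\gamma\sqrt{n}}$ for $|b|$ in a suitable range, with an at-most-polynomial loss in $|b|$; this is precisely ``the crucial estimate'' the introduction says is proven in the later sections (via standard pairs, transversality, and the Dolgopyat oscillatory-cancellation argument), so I may invoke it. Given such an estimate, I would split the Fourier sum at a threshold $|b|\leq B(n)$ chosen as a suitable power of $n$: for $|b|\le B(n)$ use the $\sC^\alpha\to\L^1$ bound on $\sL_b^n$ combined with assumption (1) relating $\B$, $\L^1$ and $\sC^\alpha$, and with the Sobolev-type decay of the Fourier coefficients $\norm{\phi_b}_{\sC^\alpha}$, which costs factors of $|b|$ but is beaten by the exponential in $\sqrt n$; for $|b|>B(n)$ simply use that $\norm{\phi_b}_{\sC^\alpha}$ decays polynomially in $|b|$ against $\norm{\phi}_{\sC^\alpha}$ while $\norm{\sL_b^n}$ and $\norm{\psi_b}_{\L^\infty}$ are bounded, and sum the tail. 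Optimizing the threshold $B(n)$ (a power of $n$) balances the two contributions and yields the $e^{-\gamma_3\sqrt n}$ rate. The approximation property (3) of $\B$ is what lets one pass from $\phi\in\sC^\alpha$ to the $\L^1$/$\B$-type estimates without regularity loss beyond polynomial order.

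The main obstacle is obtaining the uniform-in-$b$ bound on $\sL_b^n$ with controlled $b$-dependence; everything in this section is soft once that is in hand. Within this section, the delicate bookkeeping is ensuring the losses in $|b|$ (from Fourier coefficient decay, from the norm of $\sL_b$ as an operator, and from the approximation step) are all polynomial, so that choosing $B(n)$ polynomial in $n$ genuinely produces a stretched-exponential and not merely a polynomial bound; this is why the rate is $\sqrt n$ rather than $n$. I would carry out the argument in the order: (i) Fourier decomposition and non-mixing of modes; (ii) treatment of $b=0$ via the spectral gap; (iii) statement of the crucial twisted estimate (cited forward); (iv) the split-and-optimize over $b$, collecting the polynomial losses and choosing $B(n)$; (v) reassembling the bound and reading off $\gamma_3$ and $C$.
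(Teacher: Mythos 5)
Your high-level plan is the same as the paper's: reduce to mean-zero $\phi$, pass from $\nu$ to $m$ using the approximation property of $\B$ (and approximate Hölder by smoother observables to get Fourier-coefficient decay), Fourier-decompose in $y$ so that the modes decouple, handle small $b$ via the spectral gap of $\sL_b$ on $\B$, and handle large $b$ via a decay estimate for $\norm{\sL_b^n}_{\sC^\alpha\to\L^1}$ followed by summing over $b$. That part of your outline is correct.

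The gap is in the shape you assume for the crucial estimate, and it propagates into an incorrect choice of cut-off. You posit an estimate of the form $\norm{\sL_b^n}_{\sC^\alpha\to\L^1}\le C\,|b|^k e^{-\gamma\sqrt n}$, i.e.\ a $b$-\emph{uniform} stretched-exponential with at most polynomial loss in $b$. If that were available, your own plan would already give the theorem with no threshold at all: the Fourier coefficients decay like $|b|^{-d}$ for any $d$ you like (after smoothing $\phi$), so the sum over $b$ converges and you read off $e^{-\gamma\sqrt n}$ directly. The threshold optimization you then describe is superfluous under your hypothesis, which is a sign the hypothesis is not the one actually proved. What the paper's \Cref{main_estimate} gives is
\[
\norm{\sL_b^n}_{\sC^\alpha\to\L^1}\le C\,e^{-\gamma_2\,n/\ln|b|},
\]
a genuinely exponential (in $n$) bound whose \emph{rate} degenerates like $1/\ln|b|$. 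That degeneration is what forces a nontrivial optimization, and it also dictates the scale of the cut-off: balancing $e^{-\gamma_2 n/\ln L}$ against the tail $L^{-1}$ leads to $\ln L\sim\sqrt n$, i.e.\ $L=e^{\sqrt{\gamma_2 n/2}}$, \emph{exponential} in $\sqrt n$, not a power of $n$ as you propose. With $B(n)$ a power of $n$, the tail $\sum_{|b|>B(n)}|b|^{-d}\approx B(n)^{1-d}$ would only give polynomial decay in $n$, which does not match the claimed stretched-exponential rate. So the key missing idea is the precise $1/\ln|b|$-dependence of the rate in the twisted-operator bound, and with it the correct exponential-in-$\sqrt n$ scale of the Fourier cut-off.
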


\begin{proof} 
It suffices to consider $\phi$ with $\int\phi \ d\nu=0$. Hence, it suffices to estimate $\abs{\int_{\bT^2} \phi \cdot \psi \circ F^n \ d\nu} $. Also, if the result holds with $\nu$ replaced by $m$, the 2D Lebesgue measure, then it will hold for $\nu = \wp dm$, by a standard approximation argument (since $\wp \in \B$ and it can be approximated by a $\sC^\alpha$ function using our assumption on the Banach space from \Cref{setting}). Finally, if the result, i.e. stretched-exponential decay, holds for $\phi \in \sC^3(\bT^2 ,\bR)$, then we can show by approximation that it holds for H\"older observables. Note that such approximations will worsen the rate of decay but the rate will remain stretched-exponential. 

Using  \eqref{eq:2dim_L}, we may write
\[
\abs{\int \phi \cdot \psi \circ F^n \ dm} = \abs{\int \sum_{b \in \bZ}\sL_b^n (\hat\phi_{b}) \cdot \hat\psi_{-b} \ dm},
\]
where $\{\hat \phi_b\}_{b\in \bZ}$ are the Fourier coefficients of $\phi$ in the $y$-direction, and 
\begin{equation} \label{eq:1dim_Lb}
 \sL_b^n g = \sum_{h \in \cH^n} e^{ib\tau_n \circ h }\cdot  g \circ h \cdot |h'| \cdot  \Id_{O_h} \circ h .
\end{equation}
Noting that the $\B$-norm is stronger than then $\L^1$-norm,
\begin{equation} \label{eq:b_split}
\begin{split}
\abs{\int \sum_{b \in \bZ}\sL_b^n ( \hat\phi_{b}) \cdot \hat\psi_{-b} \ dm}
 &\leq  \sum_{b\in \bZ} \norm{\sL_b^n ( \hat\phi_b)}_{\L^1} \norm{\hat\psi_{-b}}_{\L^{\infty}} \\
&\leq \sum_{|b|<b_0} \norm{\sL_b^n}_{\B}\norm{ \hat\phi_b}_{\B} \norm{\hat\psi_{-b}}_{\L^{\infty}} 
\\
&+  \sum_{|b| \geq b_0} \norm{\sL_b^n}_{\mixnorm}\norm{ \hat\phi_b}_{\sC^\alpha} \norm{\hat\psi_{-b}}_{\L^{\infty}}.
 \end{split}
\end{equation}

We estimate the second sum above using the following.
\begin{prop} \label{main_estimate}
There exists $\gamma_2 > 0$ and a constant $C$, such that for every $|b| \geq b_0$, for every $n \in \bN$, 
\begin{equation}
\norm{ \sL_b^n}_{\mixnorm } \leq Ce^{-\frac{\gamma_2}{\ln{|b|}} n}.
\end{equation}
\end{prop} This is the main estimate of the article and the rest of the article is devoted to its proof. Assume that this statement holds and let us finish the proof.

Using the regularity of $\phi, \psi$, there exist constants $C$ and $d >1$ (actually $d=2$ works) such that for every $b \in \bZ$,
\begin{equation}\label{eq:Fourier_decay}
\norm{ \hat\phi_b}_{\sC^\alpha} \leq C \norm{\phi}_{\sC^d}|b|^{-d}, \norm{\hat\psi_{-b}}_{\L^\infty} \leq \norm{\psi}_{\L^{\infty}}.
\end{equation}
Using the estimate of \Cref{main_estimate},
\begin{equation}
\sum_{|b| \geq b_0} \norm{\sL_b^n}_{\mixnorm}\norm{ \hat\phi_b}_{\sC^\alpha} \norm{\hat\psi_{-b}}_{\L^{\infty}} \leq\sum_{|b| \geq b_0}  C \norm{\phi}_{\sC^d} \norm{\psi}_{\L^\infty} e^{-\frac{\gamma_2 n}{\ln{|b|}}} |b|^{-d}.\end{equation}

For estimating the sum over $|b|<b_0$ \eqref{eq:b_split}, we use the following result, which is proven in \Cref{prf_small_b}. Note that the constants $C$ and $r$ below depend on $b$ and that is why for large $|b|$ we need a different argument. 
\begin{prop} \label{small_b}
For all $b \neq 0$, there exists $C$ and $r>0$, both depending on $b$, such that for every $n \in \bN$,
\begin{equation}
\norm{\sL^n_b}_{\B} \leq Ce^{-rn}.
\end{equation}
\end{prop}
Using the estimate of \Cref{small_b} for $|b|<b_0$, and the estimate of \Cref{main_estimate} for $|b|>b_0$, it follows that
\[
 \abs{\operatorname{cor}_{\phi,\psi}(n)}  \leq \sum_{|b| \geq b_0}  C_{b_0}C \norm{\phi}_{\sC^d} \norm{\psi}_{\L^\infty} e^{-\frac{\gamma_2 n}{\ln{|b|}}} |b|^{-d}, \text{ for all } n \in \bN.
\]
Estimating the above sum yields a stretched-exponential decay. Indeed, taking $d=2$, one way to estimate $\sum_{|b| \geq b_0}e^{-\frac{\gamma_2 n}{\ln{|b|}}} |b|^{-2}$ is to split the sum into two parts $|b| \leq L$ and $|b| \geq L+1$ to get 
\[
\sum_{|b| \geq b_0}  e^{-\frac{\gamma_2 n}{\ln{|b|}}} |b|^{-2} \leq Le^{-\frac{\gamma_2 n}{\ln{|L|}}} +L^{-1}.
\]
Now choose $L$ so that the two parts of the sum are equal. The solution is $L=e^{\sqrt{\frac{\gamma_2n}{2}}}$, and gives
\[
 Le^{-\frac{\gamma_2 n}{\ln{|L|}}} +L^{-1} = 2L^{-1} \leq 2e^{-\sqrt{\frac{\gamma_2n}{2}}}.
\]
Therefore,
\[
\abs{\operatorname{cor}_{\phi,\psi}(n)} \leq  2 C_{b_0}C \norm{\phi}_{\sC^2} \norm{\psi}_{\L^\infty} e^{-\sqrt{\frac{\gamma_2}{2}}\sqrt{n}}.
\]

\end{proof}

\section{Iteration of Standard Families} \label{iter}
In this section we introduce standard families and their dynamics. We are essentially modelling the evolution of densities under $\sL_b^n$ with the iteration of standard families. 

For $\alpha \in (0,1)$, and a function $\rho:I \to \bC$ define
\begin{equation}
H(\rho) = \sup_{x,y \in I} \frac{\abs{\ln \abs{\rho(x)} - \ln\abs{\rho(y)}}}{\abs{x-y}^\alpha}.
\end{equation}
Let $\arg(\rho) \in [0,2\pi)$ be the argument of $\rho$ written in polar form. All integrals where the measure is not indicated are with respect to the Lebesgue measure. For any measurable set $A$, $|A|$ denotes the Lebesgue measure of $A$.

\begin{defin}[Standard pair] \label{std_pair}
A \emph{standard pair} with associated parameters $a, b, \ve_0$ is a pair $(I, \rho)$ consisting of an open interval $I$ and a function $\rho \in \L^1(I, \bC)$ such that
\begin{equation}\label{eq:width}
|I| < \ve_0 \leq 1; 
\end{equation}
\begin{equation}\label{eq:normalized}
 \int_I \abs{\rho}  =1; 
 \end{equation}
 \begin{equation}\label{eq:mod_regularity}
 H(\rho) \leq a; 
 \end{equation}
 \begin{equation}\label{eq:arg_regularity} 
\abs{\arg(\rho)'} \leq a|b|. 
\end{equation}
\end{defin}

\begin{defin}[Standard family]
A \emph{standard family} $\cG$ is a set of standard pairs  $\{(I_j, \rho_j)\}_{j \in \cJ}$ and an associated measure $w_\cG$ on a countable set $\cJ$.  We require that there exists a constant $B>0$ such that, 
\begin{equation} \label{eq:bd_def}
\abs{\partial_\ve \cG} := \sum_{j \in \cJ} w_\cG(j) \int_{\partial_\ve I_j}\abs{\rho_j}   \leq B\ve,  \text{ for all } \ve<\ve_0,
\end{equation}
where $\partial_\ve I_j$ denotes the $\ve$-boundary of the interval $I_j$.  
If $w_\cG$ is a probability measure, then $\cG$ is called a \emph{standard probability family}. Each standard family induces an absolutely continuous (complex) measure on $\bT^1$ with the density\footnote{By the sum \eqref{eq:family_density} we really mean the sum of the trivially extended standard pairs to densities defined on all of $\bT^1$, that is we set them equal to zero outside their domain.}:
\begin{equation} \label{eq:family_density}
\rho_\cG = \sum_{j \in \cJ} w_\cG(j)  \rho_j.
\end{equation} 
The total weight of a standard family is denoted $\abs{\cG}:= \sum_{j \in \cJ} w_\cG(j)$. The set of standard families with associated parameters $a,b,B, \ve_0$  is denoted $\cM_{a,b,B, \ve_0}$.
\end{defin}

Suppose $a >0$. For positive quantities $A$, $B$, we shall write $A \asymp_a B$ and say that $A$ and $B$ are $a$-comparable if $e^{-a}A \leq B \leq e^aA$. 
Note the following simple facts.
\begin{enumerate}
\item If $A\asymp_{a} B$ and $a'>a$, then $A\asymp_{a'} B$.
\item If $A\asymp_{a} B$, then $B\asymp_{a} A$. 
\item  If $A\asymp_{a} B$ and $A\leq C \leq B$, then $A\asymp_{a} C \asymp_{a} B$. That is, $A$, $C$, and $B$ are pairwise $a$-comparable. It follows that all values between $A$ and $B$ are pairwise $a$-comparable.
\item If $A \asymp_a B$ and $B \asymp_{a'} C$, then $A \asymp_{a+a'} C$. Therefore, $A \asymp_{a+a'} B \asymp_{a+a'} C$.
\item If $A_1 \asymp_a B_1$ and $A_2 \asymp_{a'} B_2$, then $A_1+A_2 \asymp_{\max\{a,a'\}} B_1 + B_2$.
\item If $A_1 \asymp_a B_1$ and $A_2 \asymp_{a'} B_2$, then $A_1A_2 \asymp_{a+a'} B_1 B_2$.
\end{enumerate}

\begin{lem} \label{Fed}
If $(I,\rho)$ satisfies \eqref{eq:mod_regularity}, then for every $J, J' \subset I$ with $\abs{J}\abs{J'}\neq 0$, 
\begin{equation} \label{eq:comp1}
\inf_I \abs{\rho} \asymp_{a}  Avg_J \abs{\rho}  \asymp_{a} Avg_ {J'} \abs{\rho}\asymp_{a} \sup_I\abs{\rho},
\end{equation}
where $Avg_J \abs{\rho} = |J|^{-1} \int_J |\rho| $ is the average of $\abs{\rho}$ on $J$.
\end{lem}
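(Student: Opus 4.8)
The plan is to bound the oscillation of $\ln\abs{\rho}$ between any two points of $I$ directly from \eqref{eq:mod_regularity}, and then to compare averages to the infimum and supremum using elementary monotonicity of the integral.

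First I would record that for any $x, y \in I$ the hypothesis $H(\rho) \leq a$ together with $\abs{x-y} \leq \abs{I} < \ve_0 \leq 1$ gives $\abs{\ln\abs{\rho(x)} - \ln\abs{\rho(y)}} \leq a \abs{x-y}^\alpha \leq a$, hence $\abs{\rho(x)} \asymp_a \abs{\rho(y)}$. Taking infimum and supremum over $I$ then yields $\inf_I\abs{\rho} \asymp_a \sup_I\abs{\rho}$ (one should note $\inf_I\abs{\rho} > 0$, which is exactly what finite $H(\rho)$ forces on an interval of length $<1$). Next, for any subinterval $J \subset I$ with $\abs{J} \neq 0$, integrating the pointwise bounds $\inf_I\abs{\rho} \leq \abs{\rho} \leq \sup_I\abs{\rho}$ over $J$ and dividing by $\abs{J}$ gives $\inf_I\abs{\rho} \leq Avg_J\abs{\rho} \leq \sup_I\abs{\rho}$. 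So $Avg_J\abs{\rho}$ lies between two $a$-comparable quantities, and by fact (3) in the list of simple facts it is $a$-comparable to both, and likewise for $Avg_{J'}\abs{\rho}$. Chaining these comparabilities (fact (3) again, or fact (4)) gives the displayed chain \eqref{eq:comp1}.

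There is essentially no obstacle here; the only point requiring a word of care is that the conclusion as stated uses a \emph{single} constant $a$ in every comparability sign rather than accumulating constants along the chain. This is legitimate because each individual comparison $\inf_I\abs{\rho} \asymp_a \sup_I\abs{\rho}$, $\inf_I\abs{\rho} \asymp_a Avg_J\abs{\rho}$, etc., already holds with the same $a$ (all of them follow from the single inequality $\abs{\ln\abs{\rho(x)} - \ln\abs{\rho(y)}} \leq a$ for $x,y \in I$, since an average of values in an interval $[\inf,\sup]$ cannot escape that interval), so no summation of exponents is needed — fact (3) rather than fact (4) is what is invoked. I would present the argument in exactly this order: (i) pointwise oscillation bound from $H(\rho) \leq a$ and $\abs{I} < 1$; (ii) pass to $\inf$ and $\sup$; (iii) sandwich each average between $\inf_I\abs{\rho}$ and $\sup_I\abs{\rho}$ by monotonicity of the integral; (iv) invoke fact (3) to conclude.
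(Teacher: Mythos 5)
Your proof is correct and follows essentially the same route as the paper's: obtain pointwise $a$-comparability of $\abs{\rho}$ at any two points of $I$ from $H(\rho)\le a$ and $\abs{I}<1$, deduce $\inf_I\abs{\rho}\asymp_a\sup_I\abs{\rho}$, sandwich each average between the infimum and supremum, and conclude via fact (3). The one thing you make slightly more explicit than the paper is the role of $\abs{I}<\ve_0\le 1$ in turning $a\abs{x-y}^\alpha$ into the uniform constant $a$, which is a useful clarification but not a different argument.
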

\begin{proof}
Note that \eqref{eq:mod_regularity} implies that for every $x, y \in I$, $e^{-a\abs{x-y}^\alpha}\abs{\rho}(y) \leq \abs{\rho}(x) \leq e^{a\abs{x-y}^\alpha}\abs{\rho}(y)$. This implies $\inf_I \abs{\rho} \asymp_a \sup_I\abs{\rho}$. For the rest, observe that for every $J \subset I$, $\inf_I\abs{\rho} \leq \inf_J \abs{\rho} \leq Avg_J \abs{\rho} \leq \sup_J \abs{\rho} \leq \sup_I\abs{\rho}$; therefore, lying between $a$-comparable quantities, the averages are also $a$-comparable.
\end{proof}

Note that since \eqref{eq:dist} implies $H(h') \leq D$, we may apply \Cref{Fed} to $(f^n(O_h), h')$. It follows that
\begin{equation} \label{eq:int_dist}
\sup_{f^n(O_h)} \abs{h'} \asymp_D Avg_{f^n(O_h)} \abs{h'} = \frac{\abs{O_h}}{\abs{f^n(O_h)}} \text{, for every } n \in \bN, h \in \cH^n.
\end{equation}
For the last equality we have used that $f^n$ is one-to-one on $O_h$.

Given a standard family $\cG \in \cM_{a,b,B,\ve_0}$, we define its $n$-th iterate as follows. 
\begin{defin} [Iteration] \label{iteration}
Let $\cG$ be a standard family with index set $\cJ$ and weight $w_{\cG}$.  
For $(j,h) \in \cJ \times \cH^n$ such that $\abs{f^n(I_j \cap O_h)} \geq \ve_0$, let $\cU_{(j,h)}$ be the index set of a finite partition $\{U_{\ell}\}_{\ell \in \cU_{(j,h)}}$ of the interval $f^n(I_j \cap O_h)$ into open intervals\footnote{Modulo a finite set of endpoints.} of size
\begin{equation} \label{eq:chop_size}
\ve_0/3 \leq \abs{U_\ell} < \ve_0.
\end{equation}
For $(j,h) \in \cJ \times \cH^n$ such that $0<\abs{f^n(I_j \cap O_h)} < \ve_0$ set $\cU_{(j,h)}=\emptyset$. Define 
\begin{equation}
\cJ_{n}:=\{(j,h,\ell)  | (j,h)\in \cJ \times \cH^n, \ell \in \cU_{(j,h)}, I_j\cap O_h \neq \emptyset \}.\footnote{When $\cU_{(j,h)} = \emptyset$, by $(j,h,\ell)$ we mean $(j,h)$.}
\end{equation}  
For every $j_{n}:=(j, h, \ell) \in \cJ_{n}$, define
\begin{eqnarray*}
I_{j_{n}} &:=& f^n(I_{j} \cap O_h) \cap U_\ell, \\
\rho_{j_{n}} &:=& e^{ib\tau_n \circ h}\rho_{j} \circ h |h'| z_{j_{ n}}^{-1}, \text{ where }z_{j_{ n}} :=\int_{I_{j_{ n}}} \abs{\rho_{j}} \circ h \abs{h'}  .
\end{eqnarray*}
Define 
$
\cG_{ n} := \left\{\left(I_{j_{ n}}, \rho_{j_{ n}} \right)\right\}_{j_{ n}\in \cJ_{ n}}
$
and associate to it the measure given by 
\begin{equation}\label{eq:weight_evol}
w_{\cG_{ n}}(j_{n}) = z_{j_{ n}} w_{\cG}(j).
\end{equation}
\end{defin}

\begin{rem} Comparing \eqref{eq:1dim_Lb} with the definition of $\cG_n$ and the measure associated to it \eqref{eq:family_density}, we have
\begin{equation} \label{eq:connection}
\sL^n_b \rho = \rho_{\cG_n}.
\end{equation}
This is the main connection between the evolution of densities under $\sL^n_b$ and the evolution of standard families.
\end{rem}

\subsection{Invariance}
The first thing to show is the invariance of $\cM_{a,b,B,\ve_0}$ under iterations of $\sL_b^n$ for large enough $a,B,n$ and small enough $\ve_0$. 
\begin{rem}\label{fixed_param}
In this section, by $a,B, n$ large and $\ve_0$ small we mean values that satisfy the following inequalities simultaneously. 
\begin{enumerate}
\item $e^{-\lambda \alpha n} + D/a <1$,
\item $e^{-\lambda n} + C_\tau/a <1$,
\item $e^{4a+2D}(2^n e^{-\lambda n}+ e^{-\sigma}) <1$,
\item $\sigma>0$ is such that there exists $\cH_\sigma^n \subset \cH^n$ such that $\cH_0^n:=\cH^n \setminus \cH_\sigma^n$ is finite, and $\sum_{h \in\cH_\sigma^n}\sup\abs{h'} < e^{-\sigma}$,
\item $\ve_0$ is such that for every interval $I$ with $\abs{I}< \ve_0$, $
\#\left\{ h \in \cH^n_0: I \cap O_h \neq \emptyset \right\} \leq 2^n$.
\end{enumerate} The constants $D$, $C_\tau$ were introduced in \Cref{setting}. One may first choose $a$ and $n$ large enough that the first two inequalities hold. Then also choose $\sigma$ (and $n$) large enough that $e^{4a+2D}(2^n e^{-\lambda n}+ e^{-\sigma})<1$. The value of $\ve_0$ is then determined by $n$ and $\sigma$.
\end{rem}

\begin{prop}\label{invariance}
Suppose $\cG \in \cM_{a,b,B,\ve_0}$ is a standard family. For every $n \in \bN$, for every $(I_{j_n},\rho_{j_n}) \in \cG_n$ we have
\begin{equation} \label{eq:normalized_1}
\int_{I_{j_{n}}}\abs{ \rho_{j_{n}} }   =1,
\end{equation}
\begin{equation} \label{eq:mod_reg_1}
H(\rho_{j_{n}}) \leq a (e^{-\lambda \alpha n} + a^{-1}D),
\end{equation}
\begin{equation} \label{eq:arg_reg_1}
\abs{\arg(\rho_{j_{n}})'} \leq a |b|  (e^{-\lambda n} + a^{-1}C_\tau).
\end{equation}
For every $n \in \bN$,
\begin{equation} \label{eq:total_weight_inv}
\abs{\cG_n} = \abs{\cG}.
\end{equation}
For every $a$ and $n$ large, for every $\sigma>0$, if $\ve_0>0$ is small enough, then
\begin{equation} \label{eq:growth_lemma}
 \abs{\partial_\ve \cG_{n}} \leq C_a(2^n+e^{-\sigma} e^{\lambda n})\abs{\partial_{e^{-\lambda n} \ve}\cG}  + C_a \ve_0^{-1}\ve \abs{\cG}, \text{ for all } \ve < \ve_0.
\end{equation}\end{prop}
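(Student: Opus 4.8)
The plan is to verify each of the five assertions directly from Definition \ref{iteration}, using the chain rule, the distortion bound \eqref{eq:dist}, the expansion \eqref{eq:expansion}, and the roof-regularity bound \eqref{eq:roof_regularity}. The normalization \eqref{eq:normalized_1} is immediate: by construction $z_{j_n} = \int_{I_{j_n}} \abs{\rho_j}\circ h\, \abs{h'}$, and changing variables $u = h(x)$ gives $\int_{I_{j_n}} \abs{\rho_{j_n}} = z_{j_n}^{-1}\int_{I_{j_n}}\abs{\rho_j}\circ h\,\abs{h'} = 1$. Next, \eqref{eq:mod_reg_1}: since $\abs{\rho_{j_n}} = \abs{\rho_j}\circ h\,\abs{h'}\,z_{j_n}^{-1}$ and the constant $z_{j_n}^{-1}$ drops out of $H(\cdot)$, we have $\ln\abs{\rho_{j_n}} = \ln(\abs{\rho_j}\circ h) + \ln\abs{h'} + \mathrm{const}$, so $H(\rho_{j_n}) \leq H(\rho_j\circ h) + H(h')$. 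For the first term, $\abs{\ln\abs{\rho_j}(h(x)) - \ln\abs{\rho_j}(h(y))} \leq a\abs{h(x)-h(y)}^\alpha \leq a\,(\sup\abs{h'})^\alpha\abs{x-y}^\alpha \leq a\,e^{-\lambda\alpha n}\abs{x-y}^\alpha$ by the mean value theorem and \eqref{eq:expansion}; for the second term \eqref{eq:dist} gives $H(h')\leq D$. Adding yields \eqref{eq:mod_reg_1}. For \eqref{eq:arg_reg_1}: $\arg(\rho_{j_n}) = b\,\tau_n\circ h + \arg(\rho_j)\circ h \pmod{2\pi}$, so $\abs{\arg(\rho_{j_n})'} \leq \abs{b}\,\abs{(\tau_n\circ h)'} + \abs{\arg(\rho_j)'\circ h}\,\abs{h'} \leq \abs{b}\,C_\tau + a\abs{b}\,e^{-\lambda n}$ using \eqref{eq:roof_regularity}, \eqref{eq:arg_regularity}, and \eqref{eq:expansion}; this is \eqref{eq:arg_reg_1}. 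The total-weight identity \eqref{eq:total_weight_inv} follows by summing \eqref{eq:weight_evol}: $\abs{\cG_n} = \sum_{j_n} z_{j_n} w_\cG(j) = \sum_{j\in\cJ} w_\cG(j)\sum_{(h,\ell)} \int_{I_{j_n}}\abs{\rho_j}\circ h\,\abs{h'}$, and for fixed $j$ the intervals $I_{j_n}$ (over all $h$ and the partition $\{U_\ell\}$) tile $f^n(I_j)$ up to a null set, with $h = (f^n)^{-1}$ on the relevant branch, so the inner sum is $\int_{I_j}\abs{\rho_j} = 1$; hence $\abs{\cG_n} = \sum_j w_\cG(j) = \abs{\cG}$.

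The substantive part is the growth estimate \eqref{eq:growth_lemma}, which is the expected main obstacle. Here I would unwind the definition: $\abs{\partial_\ve\cG_n} = \sum_{j_n} w_{\cG_n}(j_n)\int_{\partial_\ve I_{j_n}}\abs{\rho_{j_n}} = \sum_j w_\cG(j)\sum_{(h,\ell)}\int_{\partial_\ve I_{j_n}}\abs{\rho_j}\circ h\,\abs{h'}$, and change variables back to $O_h$: $\int_{\partial_\ve I_{j_n}}\abs{\rho_j}\circ h\,\abs{h'} = \int_{h(\partial_\ve I_{j_n})}\abs{\rho_j}$. The preimage $h(\partial_\ve I_{j_n})$ is contained in an $(e^{-\lambda n}\ve)$-neighborhood of the endpoints of $h(I_{j_n})\subset I_j\cap O_h$, using that $\abs{h'}\leq e^{-\lambda n}$. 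The endpoints of $h(I_{j_n})$ are of two types: those coming from $\partial(I_j\cap O_h)$ — i.e. either endpoints of $I_j$ (which contribute to $\abs{\partial_{e^{-\lambda n}\ve}\cG}$ after accounting for how many branches $h$ meet $I_j$) or endpoints of $O_h$ (boundaries of the dynamical partition, these are "created" cuts) — and those coming from the artificial subdivision $\{U_\ell\}$ into pieces of size between $\ve_0/3$ and $\ve_0$. The $U_\ell$-cuts and the $O_h$-cuts are the source of the two terms: the number of $O_h$-pieces meeting an interval of length $<\ve_0$ is at most $2^n$ for $h\in\cH_0^n$ (by item (5) of Remark \ref{fixed_param}) and is controlled by $e^{-\sigma}e^{\lambda n}$ in total mass for $h\in\cH_\sigma^n$ (after using $\sum_{\cH_\sigma^n}\sup\abs{h'}<e^{-\sigma}$ and dividing by the minimal image size, roughly $e^{\lambda n}\abs{O_h}$, which is where the factor $e^{\lambda n}$ enters), while each $U_\ell$-piece has length $\geq\ve_0/3$ so the number of them inside $f^n(I_j\cap O_h)$ is at most $3\abs{f^n(I_j\cap O_h)}/\ve_0$, and the mass each contributes near its (interior) endpoints is $\asymp_{\ldots} \ve\cdot Avg\abs{\rho_{j_n}}$ by Lemma \ref{Fed}; summing gives the $C_a\ve_0^{-1}\ve\abs{\cG}$ term. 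The constant $C_a$ absorbs all the $a$-comparability factors $e^{O(a+D)}$ produced by invoking Lemma \ref{Fed} to pass between suprema, infima and averages of $\abs{\rho_j}$ and $\abs{h'}$.

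The bookkeeping that needs the most care is the double-counting when a single endpoint of $I_j$ is shared by many branch intervals $O_h$: one must check that $\sum_{h:\,I_j\cap O_h\neq\emptyset}\int_{(e^{-\lambda n}\ve\text{-nbhd of }\partial I_j)\cap O_h}\abs{\rho_j} \leq \int_{\partial_{e^{-\lambda n}\ve}I_j}\abs{\rho_j}$, which holds simply because the $O_h$ are disjoint — so the $\partial I_j$-contribution is bounded by $\abs{\partial_{e^{-\lambda n}\ve}\cG}$ with no combinatorial loss, and the $2^n$ and $e^{-\sigma}e^{\lambda n}$ factors attach only to the $\partial O_h$-contributions. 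Assembling these three contributions — the $I_j$-endpoint term bounded by $C_a\abs{\partial_{e^{-\lambda n}\ve}\cG}$, the $O_h$-endpoint term bounded by $C_a(2^n + e^{-\sigma}e^{\lambda n})\abs{\partial_{e^{-\lambda n}\ve}\cG}$, and the $U_\ell$-subdivision term bounded by $C_a\ve_0^{-1}\ve\abs{\cG}$ — and merging the first two into the stated coefficient yields \eqref{eq:growth_lemma}. The restriction to $\ve<\ve_0$ is used throughout so that item (5) of Remark \ref{fixed_param} applies to every $I_{j_n}$ (all of which have length $<\ve_0$) and to the $\ve$-neighborhoods in question.
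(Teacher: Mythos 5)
Your verification of \eqref{eq:normalized_1}, \eqref{eq:mod_reg_1}, \eqref{eq:arg_reg_1} and \eqref{eq:total_weight_inv} is correct and follows the paper exactly: the normalization from change of variables, $H(\rho_{j_n}) \leq H(h') + e^{-\lambda\alpha n}H(\rho_j)$ from multiplicativity of $H$ and the MVT with \eqref{eq:dist}, the chain rule with \eqref{eq:roof_regularity} for the argument, and the telescoping sum for the total weight.

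For the growth estimate \eqref{eq:growth_lemma}, the overall plan is close to the paper's, but the handling of the $\cH^n_\sigma$ tail is where your sketch has a genuine gap. You bound the preimage of every $\ve$-boundary uniformly by an $e^{-\lambda n}\ve$-neighbourhood, using only $\abs{h'} \leq e^{-\lambda n}$. That is adequate for the finitely many branches in $\cH^n_0$ (where the cardinality bound $2^n$ from \Cref{fixed_param} (5) finishes the job), but it is too crude for $\cH^n_\sigma$: there are possibly infinitely many such branches meeting a given $I_j$, and a uniform $e^{-\lambda n}\ve$ per branch gives a divergent sum. The whole point of isolating $\cH^n_\sigma$ is to retain a factor $\sup_{f^n(O_h)}\abs{h'}$ per branch so that \eqref{eq:tale_sigma}, $\sum_{h\in\cH^n_\sigma}\sup\abs{h'} < e^{-\sigma}$, controls the series. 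Your phrase ``dividing by the minimal image size, roughly $e^{\lambda n}\abs{O_h}$'' does not recover this factor --- indeed, pairing $\abs{I_{j_n}}\geq e^{\lambda n}\abs{I_j\cap O_h}$ with $w_{\cG_n}(j_n)\asymp_a w_\cG(j)\abs{I_j\cap O_h}/\abs{I_j}$ makes the $\abs{I_j\cap O_h}$ cancel and loses $\sup\abs{h'}$ entirely. The paper proceeds differently for these pairs: it does not split the boundary of a short image by endpoint type (for a very short $I_{j_n}$ the $I_j$-side and $O_h$-side of $\partial_\ve I_{j_n}$ are not even disjoint), but bounds the full contribution $w_{\cG_n}(j_n)\int_{\partial_\ve I_{j_n}}\abs{\rho_{j_n}}$ via the weight/distortion chain \eqref{eq:pre_sum_l}, which naturally carries a $\sup\abs{h'}$ factor, sums with \eqref{eq:tale_sigma}, and only at the very last step passes from $\abs{\partial_\ve\cG}$ to $e^{\lambda n}\abs{\partial_{e^{-\lambda n}\ve}\cG}$ --- that conversion, and not the image-size heuristic, is the actual source of the $e^{\lambda n}$ in \eqref{eq:growth_lemma}. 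Your three-bucket decomposition is a reasonable organizing principle for the $\cH^n_0$ short pairs and the chopped pairs (where your $U_\ell$ count and the $\asymp_a \ve_0^{-1}\ve$ estimate indeed reproduce \eqref{eq:bd_chopped_pairs}), but the $\cH^n_\sigma$ bucket needs to be treated as a whole-boundary estimate with the $\sup\abs{h'}$ weight intact, not as an $O_h$-endpoint contribution with a uniform $e^{-\lambda n}\ve$ neighbourhood.
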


\begin{proof}
Property \eqref{eq:normalized_1} follows from the definition. 

To show \eqref{eq:mod_reg_1}, note that 
\begin{equation}
H(\rho_{j_{n}})= H \left( h' \cdot (\rho_j \circ h)\right).
\end{equation}
Using the definition of $H(\cdot)$ and noting its properties under multiplication and composition, it follows that
 \[H(\rho_{j_{n}}) \leq H(h') + e^{-\lambda\alpha n}H(\rho_{j}). \]
By  \eqref{eq:dist} we have $H(h') \leq D$, and by assumption $H(\rho_{j})\leq a$, finishing the proof of \eqref{eq:mod_reg_1}.

To show \eqref{eq:arg_reg_1}, note that $\arg(\rho_{j_{n}}) = b \tau_n \circ h + \arg(\rho) \circ h$. Therefore,
\[
\abs{\arg(\rho_{j_{n}})'} \leq |b| \abs{(\tau_n \circ h)'} + \abs{\arg(\rho)'} |h'| \leq |b| C_\tau + a|b|e^{-\lambda n}.
\]

To show \eqref{eq:total_weight_inv}, write
\begin{equation}
\begin{split}
\sum_{j_{n} \in \cJ_{n}} w_{\cG_{n}} (j_n) 
&= \sum_{j_{n} \in \cJ_{n}} w_{\cG}(j) \int_{I_{j_n}} \abs{\rho_{j}} \circ h \abs{h'}  \\
&= \sum_{(j,h) \in \cJ \times \cH^n} \sum_{\ell \in \cU_{(j,h)}} w_{\cG}(j) \int_{f^n(I_j \cap O_h) \cap U_\ell} \abs{\rho_{j}} \circ h \abs{h'}  \\
&=\sum_{(j,h) \in \cJ \times \cH^n} w_{\cG}(j) \int_{f^n(I_{j} \cap O_h)} \abs{\rho_{j}} \circ h \abs{h'}  \\
&=\sum_{j \in \cJ} w_{\cG}(j) \sum_{h \in \cH^n}\int_{f^n(I_{j} \cap O_h)} \abs{\rho_{j}} \circ h \abs{h'}  \\
&= \sum_{j \in \cJ} w_{\cG} (j) \int_{I_j} \abs{\rho_{j}}  = \sum_{j \in \cJ} w_{\cG} (j).
\end{split}
\end{equation}
Note that in the second line we wrote $\cJ \times \cH^n$ instead of $\{(j,h) \in \cJ \times \cH^n | I_j \cap O_h \neq \emptyset\}$. We can do this because if $I_j \cap O_h = \emptyset$, then the corresponding terms are zero.
The third equality follows by summing over all $\ell$ since the intervals $f^n(I_{j} \cap O_h) \cap U_\ell$ form a partition of the interval $f^n(I_{j} \cap O_h)$. The last line is a consequence of change of variables and $\int\abs{\rho_j} $ being equal to $1$.

To prove  \eqref{eq:growth_lemma}, suppose $\sigma>0$. Then, \eqref{eq:summability} implies that there exists $\cH^n_\sigma \subset \cH^n$ such that
\begin{equation}
\cH^n_0  :=\cH^n \setminus \cH^n_\sigma \text{ is finite},
\end{equation}
and
\begin{equation} \label{eq:tale_sigma}
\sum_{h \in \cH^n_\sigma}\sup_{f^n(O_h)}\abs{h'} < e^{-\sigma}.
\end{equation}
Since $\cH^n_0$ is finite, choose $\ve_0 = \ve_0(\cH^n_0)$ such that\footnote{There is some freedom here to choose $\ve_0$. The value of $\ve_0$ depends on the partition $\{O_h\}_{h \in \cH^n}$ and the value of $\sigma$. Note that since we only use \eqref{eq:growth_lemma} with a fixed $n$, the value of $\ve_0$ causes no problems even if it is very small. The optimal value depends on the underlying system.} for every interval $I$ with $\abs{I}< \ve_0$,
\begin{equation}
\#\left\{ h \in \cH^n_0: I \cap O_h \neq \emptyset \right\} \leq 2^n.
\end{equation}

Suppose $\ve < \ve_0$. We have, by definition, 
\[
\abs{\partial_\ve \cG_{n}} := \sum_{j_{n} \in \cJ_{n}} w_{\cG_{n}}(j_n)\int_{\partial_\ve I_{j_{n}}} \abs{\rho_{j_{n}}}.
\]
We split the sum into two parts according to whether $\cU_{(j,h)} \neq \emptyset$ or $\cU_{(j,h)} = \emptyset$. The two parts are respectively,
\[
\sum_{\{j_{n} \in \cJ_{n}| \cU_{(j,h) \neq \emptyset}\}} w_{\cG_{n}}(j_n)\int_{\partial_\ve I_{j_{n}}} \abs{\rho_{j_{n}}} = \sum_{j \in \cJ}\sum_{\{h \in \cH^n| f^n(I_j \cap O_h) \geq \ve_0\}} \sum_{\ell \in \cU_{(j,h)}} w_{\cG_{n}}(j_n)\int_{\partial_\ve I_{j_{n}}} \abs{\rho_{j_{n}}},
\]
and
\[
\sum_{\{j_{n} \in \cJ_{n}| \cU_{(j,h) = \emptyset}\}} w_{\cG_{n}}(j_n)\int_{\partial_\ve I_{j_{n}}} \abs{\rho_{j_{n}}} = \sum_{j \in \cJ}\sum_{\{h \in \cH^n| f^n(I_j \cap O_h) < \ve_0\}} w_{\cG_{n}}(j_n)\int_{\partial_\ve I_{j_{n}}} \abs{\rho_{j_{n}}}.
\]

\textbf{The case $\cU_{(j,h)} \neq \emptyset$:} First note that \eqref{eq:mod_reg_1} implies $H(\rho_{j_n}) \leq a$ for sufficiently large $n$. Therefore, \eqref{eq:comp1} implies 
$\abs{\partial_\ve I_{j_n}}^{-1} \int_{\partial_\ve I_{j_n}}  \abs{\rho_{j_n}}  \asymp_a \abs{ I_{j_n}}^{-1} \int_{I_{j_n}}\abs{\rho_{j_n}} = \abs{ I_{j_n}}^{-1}$. 
Hence,
\begin{equation}\label{eq:bd_int}
\int_{\partial_\ve I_{j_n}} \abs{\rho_{j_n}}\asymp_a  \frac{\abs{\partial_\ve I_{j_n}}}{\abs{ I_{j_n}}} =  \frac{\abs{\partial_\ve I_{j_n}}}{\abs{f^n(I_j\cap O_h)\cap U_\ell}}. 
\end{equation}
Observe that by definition, $w_{\cG_{n}}(j_n) = w_{\cG} (j) z_{j_n}= w_{\cG} (j) \int_{f^n(I_j \cap O_h) \cap U_\ell} \abs{\rho_j} \circ h \abs{h'} $. Hence by change of variables $w_{\cG_{n}}(j_n) =w_{\cG} (j) \int_{I_j \cap O_h \cap f^{-n}(U_\ell)} \abs{\rho_j}$. Since $H(\rho_j)\leq a$ and $\int_{I_j} \abs{\rho_j}=1$, it follows by \eqref{eq:comp1} that
\begin{equation}\label{eq:bd_w}
w_{\cG_{n}}(j_n)  \asymp_a w_{\cG}(j) \frac{\abs{I_j \cap O_h \cap f^{-n}(U_\ell)}}{\abs{I_j}}.
\end{equation}
Putting \eqref{eq:bd_int} and \eqref{eq:bd_w} together, and then using the distortion estimate \eqref{eq:int_dist},
\begin{equation} \label{eq:pre_sum_l}
\begin{split}
w_{\cG_{n}}(j_n) \int_{\partial_\ve I_{j_n}} \abs{\rho_{j_n}} &\asymp_{2a} w_{\cG}(j) \frac{\abs{\partial_\ve I_{j_n}}\abs{I_j \cap O_h \cap f^{-n}(U_\ell)}}{\abs{I_j}\abs{f^n(I_j\cap O_h)\cap U_\ell}} \\
&\leq_{2a+D} w_{\cG}(j) \abs{\partial_\ve I_{j_n}}  \abs{I_j}^{-1} \sup_{f^n(I_j \cap O_h)} \abs{h'}.\\
&\leq_{2a+D} w_{\cG}(j) 2\ve \abs{I_j}^{-1} \sup_{f^n(I_j \cap O_h)} \abs{h'}.
\end{split}
\end{equation}
Note that by the notation $\leq_a B$ we mean $\leq e^aB$. To finish the estimate, we need to sum over $\ell$, then sum over $h$ such that $f^n(I_j \cap O_h) \geq \ve_0$ and then over $j \in \cJ$. Note that since each interval $f^n(I_j \cap O_h)$ is chopped into intervals of size $\geq \ve_0/3$, the number of elements in $\cU_{(j,h)}$ is bounded by $3\ve_0^{-1}\abs{f^n(I_j \cap O_h)}$. Therefore, summing \eqref{eq:pre_sum_l} over all $\ell \in \cU_{(j,h)}$, and then using the distortion estimate \eqref{eq:int_dist} yields, 
\[
2\ve 3\ve_0^{-1} w_{\cG}(j) \abs{I_j}^{-1} \sup_{f^n(I_j \cap O_h)} \abs{h'} \abs{f^n(I_j \cap O_h)} \leq_{2a+2D} 6\ve \ve_0^{-1} w_{\cG}(j) \abs{I_j}^{-1}\abs{I_j \cap O_h}.
\]
Summing over $h$ such that $f^n(I_j\cap O_h) \geq \ve_0$  and noting that this is no greater than summing over all $h \in \cH^n$ yields 
\[
\leq_{2a+2D} 6\ve \ve_0^{-1} w_{\cG}(j) \abs{I_j}^{-1}\abs{I_j}= 6\ve \ve_0^{-1}w_\cG(j).
\]
Finally, summing over $j \in \cJ$ yields,
\begin{equation}\label{eq:bd_chopped_pairs}
\sum_{\{j_n \in \cJ_n| \cU_{(j,h)} \neq \emptyset\}} w_{\cG_{n}}(j_n)\int_{\partial_\ve I_{j_{n}}} \abs{\rho_{j_{n}}} \leq_{2(a+D)}6\ve \ve_0^{-1}\abs{\cG}.
\end{equation}

\textbf{The case $\cU_{(j,h)} = \emptyset$:} We need to estimate: 
\[
\sum_{\{j_{n} \in \cJ_{n}| \cU_{(j,h) = \emptyset}\}} w_{\cG_{n}}(j_n)\int_{\partial_\ve I_{j_{n}}} \abs{\rho_{j_{n}}} = \sum_{j \in \cJ}\sum_{\{h \in \cH^n| f^n(I_j \cap O_h) < \ve_0\}} w_{\cG_{n}}(j_n)\int_{\partial_\ve I_{j_{n}}} \abs{\rho_{j_{n}}}.
\]
We further split the second sum into two parts, one over $\cH^n_0$ and the other over $\cH^n_\sigma$.

For the sum over $\cH^n_0$, we have the bound
\[
\begin{split}
& \sum_{j \in \cJ} w_{\cG}(j) \sum_{\{h \in \cH_0^n| f^n(I_j \cap O_h) < \ve_0\}}\int_{\partial_\ve f^n (I_j \cap O_h)} \abs{\rho_{j}} \circ h \abs{h'}   \\ 
&\leq
\sum_{j \in \cJ} w_{\cG}(j) \sum_{\{h \in \cH_0^n| f^n(I_j \cap O_h) < \ve_0\}} \left[ \int_{f^n\left(\partial_{e^{-\lambda n}\ve} I_{j} \cap O_h\right)} + \int_{f^n\left(\partial_{e^{-\lambda n}\ve} O_h\cap I_{j}\right)} \right] \abs{\rho_{j}} \circ h \abs{h'}     \\
&\leq \abs{\partial_{e^{-\lambda n} \ve} \cG} + \sum_{j \in \cJ} w_{\cG}(j) \sum_{\{h \in \cH_0^n| f^n(I_j \cap O_h) < \ve_0\}}  \int_{\partial_{e^{-\lambda n}\ve} O_h \cap I_{j}} \abs{\rho_{j}}  . \\
\end{split}
\]
The first inequality holds because if a point is at a distance less than $\ve$ from the boundary of $f^n (I_{j} \cap O_h)$, then its preimage must be at a distance $e^{-\lambda n}\ve$ from the boundary of $I_{j}$ or from the boundary of $O_h$.  The second inequality is a consequence of change of variables and $f^n$ being one-to-one on $I_j\cap O_h$. 

Since $\abs{I_j} < \ve_0$, by the choice of $\ve_0$, it follows that $I_{j}$ intersects at most $2^n$ intervals $O_h$. Also, \eqref{eq:comp1} implies that $Avg_{\partial_{e^{-\lambda n}\ve} O_h \cap I_{j}}\abs{\rho_j} \asymp_a Avg_{\partial_{e^{-\lambda n}\ve} I_{j}} \abs{\rho_j}$. This in turn implies $\int_{\partial_{e^{-\lambda n}\ve} O_h \cap I_{j}}\abs{\rho_j} \leq_a \int_{\partial_{e^{-\lambda n}\ve} I_j }\abs{\rho_j}$ because $\abs{\partial_{e^{-\lambda n}\ve} O_h \cap I_{j}} \leq \abs{\partial_{e^{-\lambda n}\ve} I_j }$. Therefore,
\begin{equation} \label{eq:bd_short_pairs}
\begin{split}
& \sum_{j \in \cJ} w_{\cG}(j) \sum_{\{h \in \cH_0^n| f^n(I_j \cap O_h) < \ve_0\}}\int_{\partial_\ve f^n (I_j \cap O_h)} \abs{\rho_{j}} \circ h \abs{h'}  \\ &\leq_a \abs{\partial_{e^{-\lambda n} \ve} \cG} +  2^n\sum_{j \in \cJ} w_{\cG_{k}}(j)\int_{\partial_{e^{-\lambda n}\ve} I_{j}} \abs{\rho_{j}}  \\
&\leq_a 2^n 2\abs{\partial_{e^{-\lambda n} \ve} \cG}.
\end{split}
\end{equation}

For the sum over $\cH^n_\sigma$, similarly to \eqref{eq:pre_sum_l}, we have the bound
\[
\sum_{\{j_n | \cU_{(j,h)} = \emptyset\}} w_{\cG_{n}}(j_n)\int_{\partial_\ve I_{j_{n}}} \abs{\rho_{j_{n}}} \leq_{2a+D} \sum_{j \in \cJ} w_{\cG}(j)  \sum_{\{h \in \cH_\sigma^n| f^n(I_j \cap O_h) < \ve_0\}}\abs{I_j}^{-1}\abs{\partial_\ve I_{j_n}}\sup_{f^n(I_j \cap O_h)} \abs{h'}.
\]
Multiplying and dividing the right hand side by $\abs{\partial_\ve I_j}$ and using $\abs{\partial_\ve I_{j_n}}\abs{\partial_\ve I_j}^{-1} \leq 1$, the right hand side is 
\[ \leq_{2a+D} \sum_{j \in \cJ} w_{\cG}(j) \abs{\partial_\ve I_{j_n}} \abs{I_j}^{-1} \sum_{\{h \in \cH_\sigma^n| f^n(I_j \cap O_h) < \ve_0\}}\sup_{f^n(I_j \cap O_h)} \abs{h'}.
\]
Notice that $\abs{\partial_\ve I_{j_n}} \abs{I_j}^{-1} \asymp_a \int_{\partial_\ve I_j} \abs{\rho_j}$. Therefore, the above quantity is
\[ 
\leq_{3a+D} \sum_{j \in \cJ} w_{\cG}(j)\int_{\partial_\ve I_j} \abs{\rho_j} \sum_{\{h \in \cH_\sigma^n| f^n(I_j \cap O_h) < \ve_0\}}\sup_{f^n(I_j \cap O_h)} \abs{h'}.
\]

Using \eqref{eq:tale_sigma}, the estimate for the sum over $\cH^n_\sigma$ is
\begin{equation} \label{eq:bd_tail}
\leq_{3a+D} e^{-\sigma}\sum_{j \in \cJ} w_{\cG}(j) \int_{\partial_\ve I_j} \abs{\rho_j} =  e^{-\sigma} \abs{\partial_\ve \cG} \leq_a e^{-\sigma}\abs{\partial_{e^{-\lambda n} \ve}\cG}.
\end{equation}

Finally, adding \eqref{eq:bd_chopped_pairs}, \eqref{eq:bd_short_pairs} and \eqref{eq:bd_tail} together, we arrive at 
\begin{equation}
 \abs{\partial_\ve \cG_{n}} \leq_{4a+2D} 2(2^n+e^{-\sigma} e^{\lambda n})\abs{\partial_{e^{-\lambda n} \ve}\cG}  + 6\ve_0^{-1}\ve \abs{\cG}.
\end{equation}
\end{proof}

\begin{lem} \label{iterated_growth_lemma}
Suppose $\cG \in \cM_{a,b,B,\ve_0}$ with $a$ sufficiently large and $\ve_0$ sufficiently small. Then, there exist $C$, $\bar C$, and $0<\beta < \lambda$ such that,
\begin{equation} \label{eq:iterated_growth_lemma}
\abs{\partial_\ve \cG_{m}} \leq Ce^{\beta m} \abs{\partial_{e^{-\lambda m}\ve}\cG} + \bar C \ve, \text{ for all } m \in \bN \text{, and } \ve < \ve_0.
\end{equation}
\end{lem}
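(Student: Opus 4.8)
The plan is to iterate the single-step growth estimate \eqref{eq:growth_lemma} from \Cref{invariance}. Fix once and for all a value of $n$ large enough (and a corresponding $\sigma$ and $\ve_0$ small) that the quantity $\kappa := C_a(2^n + e^{-\sigma}e^{\lambda n})$ satisfies $\kappa < e^{\lambda n}$; this is possible by \Cref{fixed_param}(3), since $e^{4a+2D}(2^n e^{-\lambda n} + e^{-\sigma}) < 1$ forces $C_a(2^n + e^{-\sigma}e^{\lambda n}) < e^{\lambda n}$ after multiplying through by $e^{\lambda n}$. Define $\beta := \tfrac1n \ln \kappa < \lambda$, so that $\kappa = e^{\beta n}$. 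Then \eqref{eq:growth_lemma} reads
\[
\abs{\partial_\ve \cG_{n}} \leq e^{\beta n}\abs{\partial_{e^{-\lambda n}\ve}\cG} + C_a \ve_0^{-1} \ve \abs{\cG}, \qquad \ve < \ve_0,
\]
and since $\abs{\cG_{kn}} = \abs{\cG}$ by \eqref{eq:total_weight_inv}, the same inequality applies with $\cG$ replaced by any iterate $\cG_{kn}$.

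Next I would iterate: writing $m = kn$ for the moment and applying the displayed inequality $k$ times (each time to the family $\cG_{(k-j)n}$ with the scale parameter $e^{-j\lambda n}\ve$), a telescoping argument gives
\[
\abs{\partial_\ve \cG_{kn}} \leq e^{k\beta n}\abs{\partial_{e^{-k\lambda n}\ve}\cG} + C_a\ve_0^{-1}\ve\abs{\cG}\sum_{j=0}^{k-1} e^{j\beta n} e^{-j\lambda n}.
\]
Because $\beta < \lambda$, the geometric sum is bounded by $(1 - e^{(\beta-\lambda)n})^{-1} =: \bar C_0$, so with $\bar C := C_a \ve_0^{-1}\bar C_0 \abs{\cG}$ — wait, $\abs{\cG}$ should not appear in the final constant, but here one uses that $\abs{\cG}$ is normalized or simply absorbs it; in fact the cleanest formulation keeps $\bar C$ proportional to $\abs{\cG}$, which is harmless since $\abs{\partial_\ve\cG} \le \abs{\cG}$ trivially and the statement as written has $\bar C$ absolute — so I would simply note $\abs{\cG}$ can be taken $\le 1$ after normalizing, or state $\bar C$ depends on $\abs{\cG}$. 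This establishes \eqref{eq:iterated_growth_lemma} along the subsequence $m \in n\bN$ with constant $C = 1$.

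Finally, to pass from multiples of $n$ to all $m \in \bN$, write $m = kn + r$ with $0 \le r < n$. Apply the subsequence estimate to the family $\cG_r$ (noting $\cG_m = (\cG_r)_{kn}$, which follows from the definition of iteration and the cocycle property $\tau_{kn+r} = \tau_{kn}\circ f^r + \tau_r$ together with $\cH^{kn+r}$ factoring through $\cH^r \times \cH^{kn}$), obtaining
\[
\abs{\partial_\ve \cG_m} \leq e^{k\beta n}\abs{\partial_{e^{-k\lambda n}\ve}(\cG_r)} + \bar C\ve.
\]
Here one must control $\abs{\partial_\delta \cG_r}$ in terms of $\abs{\partial_{e^{-\lambda m}\ve}\cG}$: since $r < n$ is bounded, a crude application of \eqref{eq:growth_lemma} (or just $\abs{\partial_\delta\cG_r}\le \abs{\cG_r} = \abs{\cG}$) shows $\abs{\partial_{e^{-k\lambda n}\ve}\cG_r} \le C' \abs{\partial_{e^{-m\lambda}\ve}\cG} + C''\ve$ for constants depending only on $n$ (hence on the fixed data), and absorbing $e^{k\beta n} \le e^{\beta m}$ and adjusting $C$, $\bar C$ accordingly yields \eqref{eq:iterated_growth_lemma} for all $m$. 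The main obstacle is bookkeeping: keeping the two scale parameters ($e^{-\lambda m}\ve$ inside $\abs{\partial\,\cdot\,}$ versus the geometric accumulation of the additive $\ve$-term) aligned through the iteration, and handling the remainder $r$ cleanly without letting constants blow up with $m$; the inequality $\beta < \lambda$ is exactly what makes the geometric series converge and is the crux of why the additive term stays $O(\ve)$ rather than growing.
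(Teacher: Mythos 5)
Your proposal is correct and follows essentially the same route as the paper: fix $n$, $\sigma$ large (and $\ve_0$ small) as in \Cref{fixed_param}, observe that $e^{\beta n}:=C_a(2^n+e^{-\sigma}e^{\lambda n})<e^{\lambda n}$, iterate \eqref{eq:growth_lemma}, and use $\beta<\lambda$ to sum the geometric accumulation of the additive $O(\ve)$ term. The only difference is cosmetic: you write $m=kn+r$ and treat $\cG_m=(\cG_r)_{kn}$, first applying the iterated multiple-of-$n$ estimate to $\cG_r$ and then bounding $\abs{\partial_\delta\cG_r}$ by a single application of \eqref{eq:growth_lemma}; the paper writes $\cG_m=(\cG_{kn})_r$, first applying the $r$-step estimate and then iterating $n$-step estimates $k$ times. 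Either bracketing is fine since $r<n$ is bounded, so the $r$-dependent constant is absorbed into $C,\bar C$. The loose thread you flagged about the factor $\abs{\cG}$ in the additive term is also present in the paper's own final display (it arrives at $\bar C\ve\abs{\cG}$ though the statement reads $\bar C\ve$), so this is a shared constant-tracking imprecision rather than a gap in your argument.
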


\begin{proof}
The result follows by choosing a fixed $n$ as in \Cref{fixed_param} and iterating \eqref{eq:growth_lemma} with this fixed $n$. Choose $n$, $\sigma$ large such that $2e^{4a+2D}(2^ne^{-\lambda n}+e^{-\sigma}) <1$ as in \Cref{fixed_param}. Let $\beta$ be such that $e^\beta=(2e^{4a+2D})^{1/n}(2^n+e^{-\sigma} e^{\lambda n})^{1/n}$. Then $e^{\beta n}e^{-\lambda n} <1$. That is, $\beta<\lambda$. For every $m \in \bN$, write $m=kn+r$, $0\leq r<n$.  Applying \eqref{eq:growth_lemma}, we have
\begin{equation} \label{eq:once_gl}
 \abs{\partial_\ve \cG_{n}} \leq 2e^{4a+2D}(2^r+e^{-\sigma}e^{\lambda r})\abs{\partial_{e^{-\lambda r} \ve}\cG_{kn}} + 6 \ve_0^{-1}\ve \abs{\cG_{kn}}.
\end{equation}
Let $C=2e^{4a+2D}(2^r+e^{-\sigma}e^{\lambda r})$. Applying \eqref{eq:growth_lemma} $k$ more times, we get
\begin{equation}
\begin{split}
\abs{\partial_\ve \cG_{m}} &\leq C e^{\beta kn}\abs{\partial_{e^{-\lambda r}e^{-\lambda kn} \ve}\cG} + 6\ve_0^{-1}\ve(e^{-\lambda r}/(1-e^{\beta-\lambda})+1) \abs{\cG} \\
&\leq C e^{\beta m}\abs{\partial_{e^{-\lambda m}\ve}\cG} + \bar C\ve \abs{\cG},
\end{split}
\end{equation}
where $\bar C = 6\ve_0^{-1}(e^{-\lambda r}/(1-e^{\beta-\lambda})+1)$. 
\end{proof}

\begin{rem}
The invariance of $\cM_{a,b,B,\ve_0}$ under iterations by $\sL_b^m$ follows by taking $a$, $m$, $B$ large, and $\ve_0$ small as in \Cref{fixed_param}. Note that $m$ and $B$ must be large enough that $Ce^{(\beta-\lambda)m}+\bar C/B < 1$ to guarantee $\abs{\partial_\ve \cG_m} \leq B \ve$ for all $\ve <\ve_0$.
\end{rem}

\section{Transversality} \label{trans}

Due to the neutrality of the $y$-direction in our setting, it is possible that measure stays on $x$-direction invariant curves that simply rotate in the $y$-direction. In this scenario the skew-product would not be mixing. To avoid such a scenario we need an assumption that forces measure to spread in different directions. We shall refer to this property as transversality. In our setting we assume that $\tau$ is not Lipschitz-cohomologous to a piecewise constant function on the joint partition of $\tau$ and $f$. In this section, we show that this condition implies a uniform non-integrability condition, which in turn implies the transversality notion that we use later to obtain a stretched-exponential decay of correlations. The material of this section is influenced by \cite{Tsu08}.

Note that \begin{equation}
DF_{(x,y)} = \begin{pmatrix}
  f '(x)& 0  \\
  \tau'(x) & 1  \\
 \end{pmatrix},
\end{equation}
which is independent of the second coordinate. Also, note that $DF$ preserves the cone $\sK_\eta=\{(u,v): \abs{v} \leq \eta\abs{u} \}$, where $\eta=\norm{\tau'/f'}_\infty/(1-\norm{1/f'}_\infty)$.

\begin{lem} \label{coho_tang_cones}
Suppose that for every $n \in \bN$, for every  $x \in \bT^1$, and inverse branches $h_1, h_2 \in \cH^{n}$,\footnote{We really mean every pair of inverse branches that have $x$ in their domain.}
\[
DF^{n}_{h_1(x)} \sK_\eta \cap DF^{n}_{h_2(x)}\sK_\eta \neq \{0\}.
\]
Then, $\tau$ is Lipschitz-cohomologous to a piecewise constant function on the joint partition of $f$ and $\tau$.
\end{lem}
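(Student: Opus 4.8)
The plan is to convert the geometric cone condition into an analytic estimate, use that estimate to produce a bounded ``coboundary derivative'' $G$ on $\bT^1$, and then recover $\phi$ by integration. \emph{Linearising the hypothesis.} Since $DF$ is lower triangular and independent of the $y$-coordinate, for $h\in\cH^n$ one has $DF^n_{h(x)}=\begin{psmallmatrix}(f^n)'(h(x)) & 0\\ \tau_n'(h(x)) & 1\end{psmallmatrix}$, so $DF^n_{h(x)}\sK_\eta$ is exactly the sector of vectors $(U,V)$ with $\abs{V/U-(\tau_n\circ h)'(x)}\le \eta\,\abs{h'(x)}$, and $\abs{h'(x)}\le e^{-\lambda n}$ by \eqref{eq:expansion}. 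Hence two such sectors, for branches $h_1,h_2\in\cH^n$ both defined at $x$, meet non-trivially only when their centres are within the sum of the half-widths, i.e.\ the hypothesis forces
\[
\abs{(\tau_n\circ h_1)'(x)-(\tau_n\circ h_2)'(x)}\le 2\eta\,e^{-\lambda n}\qquad\text{for every }n,\ x,\ \text{and }h_1,h_2\in\cH^n .
\]
This is the uniform integrability statement alluded to just before the lemma.

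\emph{A branch-independent limiting slope (the crux).} Fix $x$ and a backward trajectory $x=z_0,z_1,z_2,\dots$ with $f(z_k)=z_{k-1}$, and let $h^{(n)}\in\cH^n$ be the branch with $h^{(n)}(x)=z_n$. From $f^n=f\circ f^{n-1}$ one obtains the telescoping identity $\tau_n\circ h^{(n)}=\tau\circ h^{(n)}+\tau_{n-1}\circ h^{(n-1)}$, whence $(\tau_n\circ h^{(n)})'(x)=\sum_{k=1}^{n}(\tau\circ h^{(k)})'(x)$, and the chain rule together with \eqref{eq:roof_regularity} applied to the one-step roof and \eqref{eq:expansion} gives $\abs{(\tau\circ h^{(k)})'(x)}\le C_\tau e^{-\lambda(k-1)}$. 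Hence $(\tau_n\circ h^{(n)})'(x)$ converges as $n\to\infty$ to $G(x):=\sum_{k\ge1}(\tau\circ h^{(k)})'(x)$, at a rate $O(e^{-\lambda n})$ that is uniform over all backward trajectories. Playing this uniform rate against the estimate of the previous paragraph — which says that the $n$-th partial sums associated to two different backward trajectories through $x$ differ by $O(e^{-\lambda n})$ — shows that $G(x)$ does not depend on the backward trajectory chosen. Moreover $\abs{G}\le C_\tau$ by \eqref{eq:roof_regularity}, and $G$ is measurable once the inverse branches are chosen by a measurable selection.

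\emph{Cohomological equation and primitive.} Splitting off the first term of the series for $G$ and factoring out the derivative of the first inverse step yields, for every $y$ with $f(y)=x$,
\[
G(x)=\frac{\tau'(y)+G(y)}{f'(y)},\qquad\text{equivalently}\qquad \tau'(y)=G(f(y))\,f'(y)-G(y),
\]
valid a.e.\ on each element of the joint partition of $f$ and $\tau$. Let $\phi$ be a primitive of $G$; since $\abs{G}\le C_\tau$ it is $C_\tau$-Lipschitz, and one checks that it can be taken $1$-periodic, i.e.\ a Lipschitz function on $\bT^1$ (equivalently, that $\int_{\bT^1}G=0$, which follows by integrating the cohomological relation and using that $f$ is a covering map). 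Then $\psi:=\tau-\phi\circ f+\phi$ is Lipschitz on each element of the joint partition with $\psi'=\tau'-(G\circ f)f'+G=0$ there, hence constant on each such element, which is exactly the asserted Lipschitz cohomology.

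\emph{Main obstacle.} Everything is a direct computation except the claim in the second step that the limiting slope $G(x)$ is independent of the backward orbit used to define it: this is the only genuinely non-formal point, and it is precisely where the cone hypothesis is consumed, through the $O(e^{-\lambda n})$ bound of the first step balanced against the uniform rate of convergence of the partial sums. A secondary technical matter is the verification that $\int_{\bT^1}G=0$, so that the primitive is defined on $\bT^1$ rather than merely on its universal cover, together with the bookkeeping over the at-most-countable set of partition endpoints, where the displayed identities hold only almost everywhere.
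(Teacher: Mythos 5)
Your route is essentially the paper's, phrased computationally rather than geometrically. The limiting slope $G(x)=\sum_{k\ge1}(\tau\circ h^{(k)})'(x)$ is precisely the paper's $\theta(x)$, the unique direction in $\bigcap_n\bigcap_{h\in\cH^n}DF^{n}_{h(x)}\sK_\eta$ (the cone $DF^{n}_{h(x)}\sK_\eta$ has projective centre $(\tau_n\circ h)'(x)$ and half-width $\eta\abs{h'(x)}$, so your Cauchy estimate on the partial sums is the quantitative version of ``nested cones contract to a common direction''), and your recursion $G(x)=(\tau'(y)+G(y))/f'(y)$ is a rearrangement of the paper's invariance relation $f'\cdot\theta\circ f=\tau'+\theta$. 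Up to the last step the two arguments coincide; yours has the minor advantage of not needing the Helly-type passage from pairwise to global cone intersection.

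The step that does not hold up is the parenthetical claim that $\int_{\bT^1}G=0$, so that the primitive $\phi$ descends to $\bT^1$; this does not follow ``by integrating the cohomological relation,'' and it is false in general. Integrating $\tau'=(G\circ f)f'-G$ over $\bT^1$ relates $\int_{\bT^1}G$ to $\int_{\bT^1}\tau'$, which equals minus the total jump of $\tau$ around the circle and need not vanish since $\tau$ is only piecewise $\sC^1$. For a concrete check take $f(x)=3x$ and $\tau(x)=x$ on $[0,1)$: then $DF=\left(\begin{smallmatrix}3&0\\1&1\end{smallmatrix}\right)$ is constant, so the cone hypothesis holds vacuously, and $G\equiv\sum_{k\ge1}3^{-k}=1/2$, so $\int_{\bT^1}G=1/2\neq0$; and indeed no $\phi\in\operatorname{Lip}(\bT^1,\bR)$ can satisfy \eqref{eq:ncoh_pwc}, since integrating $\tau'=(\phi\circ f)'-\phi'$ over $\bT^1$ would force $1=0$. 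You should be aware that the paper's own proof glosses over exactly this point — it sets $\phi(x)=\int_0^x\theta$ and calls it Lipschitz on $\bT^1$ without checking that $\int_0^1\theta=0$ — so the defect is arguably upstream in the definition \eqref{eq:ncoh_pwc}, which ought to permit $\phi$ Lipschitz merely on a fundamental domain (equivalently, a bounded measurable density $G=\phi'$). In any case the argument you give for $\int_{\bT^1}G=0$ is incorrect and should be dropped.
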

\begin{proof}
The hypothesis implies that for every $x \in \bT^1$, $\cap_{h \in \cH^{n}} DF^{n}_{h(x)}\sK_\eta \neq \{0\}$. That is, the intersection contains a common direction $\theta(x,n)$. Moreover, since the cones $DF^{n}_{h(x)}\sK_\eta$ contract uniformly under iteration, the intersection $\cap_{n \in \bN}\cap_{h \in \cH^{n}} DF^{n}_{h(x)}\sK_\eta $ contains a unique direction $(1,\theta(x))$. Since $\theta(x)$ is invariant under $DF$, we have \begin{equation} \label{eq:pre_coho}
f' \cdot \theta \circ f = \tau' + \theta.
\end{equation}
Define $\phi(x) = \int_{0}^x \theta(t)dt$. Note that since $\theta(x)$ is bounded, $\phi$ is Lipschitz. Let $p$ be the left endpoint of a partition element of the joint partition of $f$ and $\tau$ and $x$ be a point in the same partition element. 
We may write $\tau(x)= \tau(p)+\int_p^x \tau'$, where $\tau(p)$ is interpreted as the value obtained by taking a one-sided limit. Substituting $\tau'=f' \cdot \theta \circ f - \theta$ from \eqref{eq:pre_coho} into this equation, and doing a change of variables yields,
\begin{equation}\label{eq:coho}
\tau(x) = \phi \circ f(x) - \phi(x) + (\tau(p) + \phi(p) - \phi\circ f (p)).
\end{equation}
It follows that $\tau$ is Lipschitz-cohomologous to a piecewise constant function on the joint partition of $f$ and $\tau$.
\end{proof}

\begin{lem} \label{trans_cones_UNI}
Suppose for $x \in \bT^1$, $n \in \bN$, and inverse branches $h_1, h_2 \in \cH^n$ holds
\[
DF^{n}_{h_1(x)} \sK_\eta \cap DF^{n}_{h_2(x)}\sK_\eta = \{0\}.
\]
Then, there exists $C_0:=C_0(n,x)$ such that 
\[
\abs{(\tau_n \circ h_1)'(x)-(\tau_n \circ h_2)'(x)} > C_0.
\]
\end{lem}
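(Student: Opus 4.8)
The plan is to show that the two cones having trivial intersection forces a quantitative separation of the tangent directions they determine, which translates directly into a lower bound on the difference of the slopes $(\tau_n\circ h_i)'(x)$. First I would compute explicitly the image of the cone $\sK_\eta$ under $DF^n_{h(x)}$. Writing the derivative of the inverse branch, one has $DF^n_{h(x)}=\left(DF^n_{h(x)}\right)$ with the lower-triangular form inherited from $DF$; a vector $(1,v)$ with $\abs{v}\le\eta$ is sent to a vector proportional to $(1, (\tau_n\circ h)'(x)+ v\cdot (f^n\circ h)'(x)^{-1})$, using the chain rule for $\tau_n=\sum_{j<n}\tau\circ f^j$ along the orbit. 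Since $\abs{(f^n)'}\ge e^{\lambda n}$, the set of second coordinates obtained as $v$ ranges over $[-\eta,\eta]$ is the interval $[(\tau_n\circ h)'(x)-\eta e^{-\lambda n},\ (\tau_n\circ h)'(x)+\eta e^{-\lambda n}]$. Thus $DF^n_{h(x)}\sK_\eta$ is (the cone over) a small interval of half-width $\eta e^{-\lambda n}$ centered at the slope $(\tau_n\circ h)'(x)$.

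Next I would use the hypothesis. The two image cones are both contained in the half-plane $\{u>0\}$ (since $\sK_\eta$ is, and $f'>0$), so their intersection being $\{0\}$ is equivalent to the two slope-intervals $[(\tau_n\circ h_i)'(x)-\eta e^{-\lambda n},(\tau_n\circ h_i)'(x)+\eta e^{-\lambda n}]$, $i=1,2$, being disjoint. Two intervals of half-width $\eta e^{-\lambda n}$ around their centers are disjoint precisely when the centers differ by more than $2\eta e^{-\lambda n}$. Hence
\[
\abs{(\tau_n\circ h_1)'(x)-(\tau_n\circ h_2)'(x)} > 2\eta e^{-\lambda n},
\]
and one takes $C_0=C_0(n,x):=2\eta e^{-\lambda n}$ (in fact this bound is uniform in $x$; the statement only claims dependence on $n$ and $x$, which is weaker, so this is fine). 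One should double-check the boundary case: if the closed cones merely share a ray, the open/closed conventions might make the inequality non-strict, but since the hypothesis is stated with $\{0\}$ on the right and the cones are closed, disjointness of the closed intervals gives a strict gap between centers, so strictness is recovered — alternatively one can take $C_0$ slightly smaller than $2\eta e^{-\lambda n}$ to be safe.

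The only real subtlety, and the step I would be most careful about, is the precise relationship between "cones intersect trivially" and "slope intervals are disjoint," including getting the factor and the strictness right; everything else is a direct chain-rule computation combined with the uniform expansion bound \eqref{eq:expansion}. It is worth remarking (as the lemma's phrasing hints) that although $C_0$ is here genuinely uniform, the later uniform non-integrability argument will need a version that holds on a set of $x$ and for suitable $n$, so the local statement with $C_0=C_0(n,x)$ is all that is claimed at this stage; combined with the contrapositive of \Cref{coho_tang_cones}, the non-cohomology assumption \eqref{eq:ncoh_pwc} guarantees that the hypothesis of the present lemma is satisfied for some $n$ and some $x$, which is what makes it non-vacuous.
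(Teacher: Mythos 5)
Your overall strategy coincides with the paper's: translate the trivial intersection of the two image cones into a lower bound on the separation of the corresponding slopes. However, there is a concrete error in the explicit constant that is worth flagging, because it reverses an inequality.

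You correctly identify that $DF^{n}_{h(x)}\sK_\eta$ is the cone over a slope interval centred at $(\tau_n\circ h)'(x)$, but the half-width of that interval is $\eta\,\abs{h'(x)} = \eta\,\abs{(f^{n})'(h(x))}^{-1}$, \emph{not} $\eta e^{-\lambda n}$. Since \eqref{eq:expansion} gives only $\abs{(f^n)'}\geq e^{\lambda n}$, the quantity $e^{-\lambda n}$ is merely an \emph{upper} bound for $\abs{h'(x)}$, and hence for the half-width. When you then argue ``intervals of half-width $\eta e^{-\lambda n}$ are disjoint iff the centres differ by more than $2\eta e^{-\lambda n}$,'' you are treating the upper bound as an exact value, and this goes the wrong way. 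Disjointness of the true slope intervals only yields
\[
\abs{(\tau_n\circ h_1)'(x)-(\tau_n\circ h_2)'(x)} \;>\; \eta\bigl(\abs{h_1'(x)}+\abs{h_2'(x)}\bigr),
\]
which can be strictly smaller than $2\eta e^{-\lambda n}$ (e.g. when the orbit through $h_j(x)$ picks up much more expansion than the minimal $e^{\lambda n}$). So the specific bound $C_0=2\eta e^{-\lambda n}$ is not implied by the hypothesis, and, relatedly, your side remark that $C_0$ is ``uniform in $x$'' is false: the corrected constant genuinely depends on $x$ through $h_j'(x)$, which is precisely why the lemma is stated with $C_0=C_0(n,x)$. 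This does not affect the truth of the lemma, which only needs some positive $C_0(n,x)$; taking $C_0=\eta(\abs{h_1'(x)}+\abs{h_2'(x)})$ repairs the argument. For comparison, the paper sidesteps the half-width computation entirely: it simply takes $C_0$ to be the distance between the two closed disjoint cones (positive by hypothesis), notes that $(1,(\tau_n\circ h_j)'(x)+v_j h_j'(x))\in DF^{n}_{h_j(x)}\sK_\eta$ for $\abs{v_j}\leq\eta$, and concludes via the triangle inequality with $v_1=v_2=0$.
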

\begin{proof}
The hypothesis implies that the two cones $DF^{n}_{h_1(x)} \sK_\eta$ and $DF^{n}_{h_2(x)}\sK_\eta$ are a distance $C_0:=C_0(n,x)$ apart. Suppose $v_1, v_2 \in \bR$ satisfy $\abs{v_1}, \abs{v_2} \leq \eta$. Then $(1,v_1), (1,v_2) \in \sK_\eta$. Observe that for $j \in \{1,2\}$, $DF^{n}_{h_j(x)}(1,v_j)=((f^n)'\circ h_j(x), \tau' \circ h_j(x) + v_j)$. Therefore $(1, (\tau_n \circ h_j)'(x)+v_j h_j') \in DF^{n}_{h_j(x)} \sK_\eta$. Therefore, the two vectors are also  $C_0$ apart; that is,
\[
\abs{(\tau_n \circ h_1)'(x)+v_1 h_1'(x)- (\tau_n \circ h_2)'(x)-v_2 h_2'(x)} > C_0.
\]
Using the triangle inequality,
\[
\abs{(\tau_n \circ h_1)'(x)- (\tau_n \circ h_2)'(x))} > C_0 - \abs{v_1 h_1'(x)-v_2 h_2'(x)}.
\]
Taking $v_1=v_2=0$ implies the result.
\end{proof}

\begin{lem} \label{nbd_UNI}
Suppose $\tau$ is not Lipschitz-cohomologous to a piecewise constant function on the joint partition of $\tau$ and $f$. Then, there exists $x_0 \in \bT^1$, there exists $\ntrans \in \bN$, inverse branches $h_1, h_2 \in \cH^{\ntrans}$, a neighbourhood $V_{\ntrans}$ of $x_0$ contained in the open set $f^{\ntrans}(O_{h_1}) \cap f^\ntrans (O_{h_2})$, and a constant $C_1:=C_1(n_1,x_0)$ such that 
\begin{equation} \label{eq:nbd_UNI}
\abs{\left(\tau_{\ntrans} \circ h_1 - \tau_{\ntrans} \circ h_2\right)'(x)} > C_1 \text{ for every } x \in V_{\ntrans}. 
\end{equation}
\end{lem}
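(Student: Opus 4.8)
The plan is to combine \Cref{coho_tang_cones} with \Cref{trans_cones_UNI} and then upgrade the pointwise bound to a uniform bound on a neighbourhood by a continuity argument. First I would argue by contraposition: since $\tau$ is \emph{not} Lipschitz-cohomologous to a piecewise constant function on the joint partition of $f$ and $\tau$, the conclusion of \Cref{coho_tang_cones} fails, so its hypothesis must fail as well. That is, there exist $\ntrans \in \bN$, a point $x_0 \in \bT^1$, and inverse branches $h_1, h_2 \in \cH^{\ntrans}$ (both having $x_0$ in their domain) such that
\[
DF^{\ntrans}_{h_1(x_0)} \sK_\eta \cap DF^{\ntrans}_{h_2(x_0)}\sK_\eta = \{0\}.
\]
Feeding this into \Cref{trans_cones_UNI} yields a constant $C_0 = C_0(\ntrans, x_0) > 0$ with
\[
\abs{\left(\tau_{\ntrans} \circ h_1 - \tau_{\ntrans} \circ h_2\right)'(x_0)} > C_0.
\]

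Next I would promote this to a neighbourhood. Since $x_0 \in f^{\ntrans}(O_{h_1}) \cap f^{\ntrans}(O_{h_2})$, which is an open set (intersection of two open sets), and since $f$ is piecewise $\sC^{1+\alpha}$, the inverse branches $h_1, h_2$ extend to $\sC^{1+\alpha}$ maps on (closed neighbourhoods inside) $f^{\ntrans}(O_{h_1})$ and $f^{\ntrans}(O_{h_2})$ respectively; in particular $h_1, h_2$ are $\sC^1$ there. Likewise $\tau$ is piecewise $\sC^1$, hence $\tau_{\ntrans} = \sum_{j=0}^{\ntrans-1} \tau \circ f^j$ composed with a fixed inverse branch is $\sC^1$ on the relevant open set (the branch stays in a single partition element of the joint partition at each step by construction of $\cH^{\ntrans}$ relative to the refinement). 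Therefore the function $x \mapsto (\tau_{\ntrans} \circ h_1 - \tau_{\ntrans} \circ h_2)'(x)$ is continuous on the open set $f^{\ntrans}(O_{h_1}) \cap f^{\ntrans}(O_{h_2})$ containing $x_0$. By continuity there is a neighbourhood $V_{\ntrans} \subset f^{\ntrans}(O_{h_1}) \cap f^{\ntrans}(O_{h_2})$ of $x_0$ on which this function stays, in absolute value, above $C_1 := C_0/2 = C_1(\ntrans, x_0) > 0$, which is exactly \eqref{eq:nbd_UNI}.

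I expect the only genuinely delicate point to be the regularity bookkeeping in the second paragraph: one must be sure that $\tau_{\ntrans} \circ h_i$ is actually $\sC^1$ on a full neighbourhood of $x_0$, i.e. that no discontinuity of $\tau$ or breakpoint of $f$ intrudes. This is handled by recalling (as noted in \Cref{setting}) that $f^{\ntrans}$ — and hence the refinement used to index $\cH^{\ntrans}$ together with the joint partition of $\tau$ and $f$ along the orbit — is piecewise $\sC^{1+\alpha}$, so each $O_{h_i}$ (suitably taken on the joint partition) lies in a single smoothness cell for every iterate $\tau \circ f^j$, $0 \le j < \ntrans$; consequently $\tau_{\ntrans}$ restricted to $O_{h_i}$ and then precomposed with the $\sC^{1+\alpha}$ diffeomorphism $h_i$ onto $f^{\ntrans}(O_{h_i})$ is $\sC^1$ there, with the derivative extending continuously to the closure. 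Everything else is an immediate chain of the two preceding lemmas plus openness of finite intersections of open sets and continuity of a $\sC^0$ function.
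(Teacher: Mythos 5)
Your proposal is correct and follows essentially the same route as the paper: contrapose \Cref{coho_tang_cones} to produce $x_0$, $\ntrans$, $h_1$, $h_2$ with transversal cones, apply \Cref{trans_cones_UNI} for the pointwise bound, then pass to a neighbourhood by continuity. The only (cosmetic) difference is that you invoke continuity of $x \mapsto (\tau_{\ntrans}\circ h_1 - \tau_{\ntrans}\circ h_2)'(x)$ directly, whereas the paper phrases the same step as continuity of the cone distance $C(\ntrans,\cdot)$ followed by a re-application of the \Cref{trans_cones_UNI} argument — these are equivalent.
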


\begin{proof}
Suppose $\tau$ is not Lipschitz-cohomologous to a piecewise constant function on the joint partition of $\tau$ and $f$. \Cref{coho_tang_cones} implies that there exists $\ntrans \in \bN$, $x_0 \in \bT^1$, 
 and inverse branches $h_1, h_2 \in \cH^\ntrans$ such that 
\begin{equation} \label{eq:trans_cones}
DF^{\ntrans}_{h_1(x_0)} \sK_\eta \cap DF^{\ntrans}_{h_2(x_0)}\sK_\eta = \{0\}.
\end{equation}
\Cref{trans_cones_UNI} implies that there exists $C_0 = C_0(n_1, x_0)$ such that
\begin{equation} \label{eq:distant_cones}
\abs{\left(\tau_{\ntrans} \circ h_1 - \tau_{\ntrans} \circ h_2\right)'(x_0)} \geq C_0.
\end{equation}
By continuity of $(f^{\ntrans})'$ and $\tau_{\ntrans}'$ at $h_1(x_0)$ and $h_2(x_0)$ the cones $DF^{\ntrans}_{h_1(x_0)} \sK_\eta$ and $DF^{\ntrans}_{h_2(x_0)}\sK_\eta$ vary continuously in a neighbourhood of $x_0$ and so does the distance between them (i.e. $C(n,\cdot)$ varies continuously in a neighbourhood of $x_0$). It follows that there exists a neighbourhood $V_{\ntrans}$  of $x_0$ and a constant, which we again denote by $C_1:=C_1(\ntrans,x_0)$ such that
\begin{equation} \label{eq:nbd_distant_cones}
\abs{\left(\tau_{\ntrans} \circ h_1 - \tau_{\ntrans} \circ h_2\right)'(x)} \geq C_1 \text{ for every } x \in V_{\ntrans}.
\end{equation}
\end{proof}

\begin{cor}\label{persistent_nbd_UNI}
Suppose $\tau$ is not Lipschitz-cohomologous to a piecewise constant function on the joint partition of $\tau$ and $f$. Then, there exists $x_0 \in \bT^1$, there exists $\ntrans \in \bN$, inverse branches $h_1, h_2 \in \cH^{\ntrans}$, a constant $C_1:=C_1(n_1, x_0)$, and for every $n \geq \ntrans$ and every $l_1,l_2 \in \cH^{n-\ntrans}$ there exists a neighbourhood $V_n$ of $x_0$ contained in $f^n(O_{l_1 \circ h_1}) \cap f^n(O_{l_2 \circ h_2})$ such that 
\begin{equation}
\abs{\left(\tau_n \circ l_1 \circ h_1 - \tau_n \circ l_2 \circ h_2\right)'(x)} > C_1 \text{ for every } x \in V_{n}.
\end{equation}
\end{cor}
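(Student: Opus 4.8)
The plan is to reduce everything to \Cref{nbd_UNI}, exploiting the observation that the cones $DF^n_{h(x)}\sK_\eta$ shrink into one another under further iteration, so that transversality of cones at level $\ntrans$ persists at every deeper level \emph{with a separation that does not decrease}. This yields a single constant, valid for all $n$ and all continuations $l_1,l_2$ of $h_1,h_2$.

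First I would fix the data $x_0,\ntrans,h_1,h_2\in\cH^\ntrans,V_\ntrans$ produced by \Cref{nbd_UNI}, and set $C_1$ to be half the separation $C_0(\ntrans,x_0)$ of the disjoint cones $DF^\ntrans_{h_1(x_0)}\sK_\eta$ and $DF^\ntrans_{h_2(x_0)}\sK_\eta$ of \eqref{eq:trans_cones} (this separation is the quantity from which the constant of \Cref{trans_cones_UNI}, hence of \Cref{nbd_UNI}, is produced). Fix $n\geq\ntrans$ and $l_1,l_2\in\cH^{n-\ntrans}$; as in the conventions used elsewhere in the paper I tacitly restrict to those $l_i$ for which $l_i\circ h_i$ has $x_0$ in its domain, so that $f^n(O_{l_i\circ h_i})$ is an open neighbourhood of $x_0$. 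Since $F^n=F^\ntrans\circ F^{n-\ntrans}$ and $f^{n-\ntrans}\circ l_i=\Id$ implies $F^{n-\ntrans}$ carries $(l_i\circ h_i)(x_0)$ to a point with first coordinate $h_i(x_0)$, the chain rule together with the $DF$-invariance of $\sK_\eta$ gives
\[
DF^n_{(l_i\circ h_i)(x_0)}\sK_\eta \;=\; DF^\ntrans_{h_i(x_0)}\Bigl(DF^{n-\ntrans}_{(l_i\circ h_i)(x_0)}\sK_\eta\Bigr)\;\subseteq\;DF^\ntrans_{h_i(x_0)}\sK_\eta,\qquad i=1,2.
\]
Hence the two level-$n$ cones lie inside the two level-$\ntrans$ cones, so they meet only in $\{0\}$ and are separated by at least $C_0(\ntrans,x_0)=2C_1$.

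Next I would run the computation from the proof of \Cref{trans_cones_UNI} at level $n$ with the branches $l_1\circ h_1,l_2\circ h_2\in\cH^n$ and the point $x_0$: taking $v_1=v_2=0$ there shows $\abs{(\tau_n\circ l_1\circ h_1-\tau_n\circ l_2\circ h_2)'(x_0)}$ is at least the separation of these cones, hence $>2C_1$. (Alternatively one may split $\tau_n=\tau_{n-\ntrans}+\tau_\ntrans\circ f^{n-\ntrans}$, use $f^{n-\ntrans}\circ l_i=\Id$ to rewrite $\tau_n\circ l_i\circ h_i=\tau_{n-\ntrans}\circ l_i\circ h_i+\tau_\ntrans\circ h_i$, bound the first summand's derivative by $C_\tau e^{-\lambda\ntrans}$ via \eqref{eq:roof_regularity} and \eqref{eq:expansion}, and invoke \Cref{nbd_UNI} for the second; this needs $\ntrans$ chosen large, which is legitimate precisely because of the persistence above.) Finally, each $l_i\circ h_i$ is a $\sC^1$ branch of $f^{-n}$ with $x_0$ interior to its domain, so $x\mapsto(\tau_n\circ l_1\circ h_1-\tau_n\circ l_2\circ h_2)'(x)$ is continuous near $x_0$; I would then pick an open $V_n\ni x_0$ with $V_n\subseteq f^n(O_{l_1\circ h_1})\cap f^n(O_{l_2\circ h_2})$ small enough that this derivative stays $>C_1$ throughout $V_n$. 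Since $C_1=C_0(\ntrans,x_0)/2$ depends only on $\ntrans$ and $x_0$ (the branches $h_1,h_2$ being fixed once and for all), this is the asserted conclusion.

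The one delicate point is the uniformity of $C_1$ in $n$ and in $l_1,l_2$: this is exactly what the monotonicity of the cone separation under iteration provides, and it is the reason the argument is cleanest when phrased through cones rather than through a direct estimate on the Birkhoff sums $\tau_n$ (where the crude bound would only give $C_1-2C_\tau e^{-\lambda\ntrans}$, forcing one to choose $\ntrans$ large from the outset).
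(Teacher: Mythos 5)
Your proposal is correct and rests on the same key idea as the paper's proof: the cone-nesting $DF^{n}_{l_i\circ h_i(x)}\sK_\eta\subset DF^{\ntrans}_{h_i(x)}\sK_\eta$ makes the separation of the $\ntrans$-level cones persist at every deeper level, and then the argument of \Cref{trans_cones_UNI} converts separation into a lower bound on the derivative of the Birkhoff-sum difference. The only difference is one of execution: the paper runs the cone argument pointwise for every $x\in V_\ntrans$, so it can simply set $V_n:=V_\ntrans\cap f^n(O_{l_1\circ h_1})\cap f^n(O_{l_2\circ h_2})$ and keep the original constant $C_1$, whereas you establish the bound only at $x_0$ (with a factor-of-two margin) and then appeal to continuity to produce $V_n$; both are valid, but the paper's route avoids the extra continuity step and the halving of the constant.
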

\begin{proof}
Let $x_0$, $\ntrans$, $V_{\ntrans}$ and $C_1$ be as in \Cref{nbd_UNI}. For every $n \geq \ntrans$ and $l_1,l_2 \in \cH^{n-\ntrans}$, by invariance of the cone, we have $DF^{n-\ntrans}_{l_j\circ h_j(x)}\sK_\eta \subset \sK_\eta$ for every $x \in f^n(O_{l_1 \circ h_1}) \cap f^n(O_{l_2 \circ h_2})$. If also $x \in V_{\ntrans}$ (the neighbourhood of $x_0$ from \Cref{nbd_UNI}), then $DF^{\ntrans}_{h_1(x)}DF^{n-\ntrans}_{l_1\circ h_1(x)}\sK_\eta \cap DF^{\ntrans}_{h_2(x)}DF^{n-\ntrans}_{l_2\circ h_2(x)}\sK_\eta = \{0\}$. That is, the cones $DF^{n}_{l_1\circ h_1(x)}\sK_\eta$ and $DF^{n}_{l_2\circ h_2(x)}\sK_\eta$ are at least distant $C_1$ apart. As in \Cref{trans_cones_UNI}, this transversality of the cones implies
\begin{equation}
\abs{\left(\tau_{\ntrans}\circ l_1 \circ h_1 - \tau_{\ntrans} \circ l_2 \circ h_2\right)'(x)} > C_1 \text{ for every } x \in V_n, 
\end{equation}
where $V_n:=V_\ntrans \cap f^n(O_{l_1 \circ h_1}) \cap f^n(O_{l_2 \circ h_2})$. 
\end{proof}

The following shows that any interval of positive length maps forward, while getting cut and expanded, in a way that at least two of its pieces overlap and simultaneously satisfy a condition similar to \eqref{eq:nbd_UNI}.
\begin{prop} \label{overlap_UNI}
Suppose $\tau$ is not Lipschitz cohomologous to a piecewise constant function on the joint partition of $\tau$ and $f$; and, in addition, $f$ is covering. There exists a constant $C_1$ such that for every interval $I$ with $0<\delta<\abs{I} \leq \ve_0 $,  there exists $n_\delta$ such that for every $n \geq n_\delta$, there exist $h_1,h_2 \in \cH^n$, such that $O_{h_1},O_{h_2} \subset I$ and $f^n(O_{h_1}) \cap f^n(O_{h_2})$ contains an interval $I_*$ of size $0<\ovl \leq \abs{I_*}$ on which holds\footnote{The quantity $\ovl$ depends on $\delta$, $n_\delta$ and the choice of $I$. Later we will get rid of the dependence on $I$ by a compactness argument.}
\[
\abs{\left(\tau_n \circ    h_1 - \tau_n \circ   h_2 \right)'} > C_1.
\]
\end{prop}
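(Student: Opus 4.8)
The plan is to combine the covering property of $f$ with the local uniform non-integrability from \Cref{persistent_nbd_UNI}. Let $x_0$, $\ntrans$, $h_1, h_2 \in \cH^{\ntrans}$, $C_1$ and the neighbourhood $V_{\ntrans}$ of $x_0$ be as furnished by \Cref{nbd_UNI}. First I would use the covering assumption \eqref{eq:covering}: since $I$ has length at least $\delta$, it contains some partition element $O_g$ of $f^{m}$ for a suitable $m$ (more precisely, once $m$ is large enough that every branch domain of $f^m$ has length $<\delta$, and $I$ fully contains one such domain $O_g$ with $g \in \cH^m$; the covering property then gives $N(m)$ with $f^{m+N(m)}(\bar O_g) = \bT^1$). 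Composing inverse branches, after $k := m + N(m)$ steps we can reach all of $\bT^1$ from inside $I$, so in particular there are inverse branches of $f^{k}$ whose domains lie in $I$ and whose images cover a neighbourhood of $x_0$ — in fact there exist $g_1, g_2 \in \cH^{k}$ with $O_{g_1}, O_{g_2} \subset I$ and $x_0 \in f^{k}(O_{g_1}) \cap f^{k}(O_{g_2})$. I want these to be pre-composed with the \emph{same} return branch so the UNI estimate survives; the cleanest way is to pick a single $g \in \cH^{k-?}$ landing inside $V_{\ntrans}$... but more directly: apply covering to get, for $j=1,2$, inverse branches $l_j$ of $f^{k-\ntrans}$ with $O_{l_j} \subset I$ and whose image under $f^{k-\ntrans}$ contains $V_{\ntrans}$.

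Second, with $n_\delta := k$ and any $n \geq n_\delta$, I would invoke \Cref{persistent_nbd_UNI} (or repeat its argument) with the branches $h_1, h_2$ at the ``core'' and $l_1, l_2 \in \cH^{n-\ntrans}$ as the outer branches: setting $\tilde h_1 := l_1 \circ h_1$ and $\tilde h_2 := l_2 \circ h_2$ in $\cH^{n}$, we get $O_{\tilde h_1}, O_{\tilde h_2} \subset I$ and, on the neighbourhood $V_n := V_{\ntrans} \cap f^{n}(O_{\tilde h_1}) \cap f^{n}(O_{\tilde h_2})$ of $x_0$,
\[
\abs{\left(\tau_n \circ \tilde h_1 - \tau_n \circ \tilde h_2\right)'(x)} > C_1 \quad \text{for all } x \in V_n.
\]
The remaining point is that $V_n$ contains an interval $I_*$ whose length is bounded below. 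Here I would argue that since $f^{n-\ntrans}$ restricted to $O_{l_j}$ expands lengths by at least $e^{\lambda(n-\ntrans)} \geq 1$ and maps onto a set containing $V_{\ntrans}$, the set $f^{n}(O_{\tilde h_j})$ contains all of $V_{\ntrans}$ (up to the finitely many cut points at level $\ntrans$, which split $V_{\ntrans}$ into subintervals); choosing $I_*$ to be the largest such subinterval of $V_{\ntrans}$ gives a length $\ovl = \ovl(\delta, n_\delta, I) > 0$ independent of $n$, as claimed in the footnote.

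I expect the main obstacle to be the bookkeeping of cut points: one must ensure that the ``inner'' branches $h_1, h_2$ can genuinely be prefixed by outer branches whose domains sit inside a prescribed small interval $I$, which requires that the covering time and the resulting partition refinements are compatible with $V_{\ntrans}$ not being destroyed by further subdivision. Concretely, the subtlety is that $V_n$ is defined as an intersection with $f^n(O_{\tilde h_j})$, and a priori this could be much smaller than $V_{\ntrans}$; the resolution is that expansion guarantees $f^{n-\ntrans}(O_{l_j}) \supset V_{\ntrans}$ provided $l_j$ was chosen via the covering property at the appropriate level, so that $f^n(O_{\tilde h_j}) = f^{\ntrans}(O_{h_j}) \cap$ (image of the outer branch) still contains $V_{\ntrans}$ minus level-$\ntrans$ cut points. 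Once this is set up, the displayed UNI bound is immediate from \Cref{persistent_nbd_UNI} and the lower bound on $\abs{I_*}$ follows from the finiteness of the relevant cut set inside the fixed neighbourhood $V_{\ntrans}$. Finally, I would note that the constant $C_1$ is uniform (it came from \Cref{nbd_UNI} and does not depend on $I$), while $n_\delta$ and $\ovl$ depend on $\delta$ and on $I$ — the dependence on $I$ being removed later by compactness, as the footnote indicates.
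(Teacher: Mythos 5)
Your proposal follows the paper's own route: invoke \Cref{persistent_nbd_UNI} for a transversal core pair $h_1,h_2\in\cH^{\ntrans}$, use the covering property \eqref{eq:covering} (together with expansion to locate a full branch domain $O_g\subset I$) to produce outer branches $l_j$ whose domains lie inside $I$, and take the composites $l_j\circ h_j\in\cH^n$ with $I_*=V_n$. One minor bookkeeping slip: in the composite $l_j\circ h_j$ the branch $l_j$ is applied \emph{after} $h_j$, so the covering step must arrange that the range $f^{n-\ntrans}(O_{l_j})$ contains $h_j(V_{\ntrans})$ (the image of the UNI neighbourhood under the core branch), not $V_{\ntrans}$ itself; this is easily fixed and does not affect the argument, and both your write-up and the paper's are similarly informal about the precise positive lower bound on $|I_*|$.
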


\begin{proof}
\Cref{persistent_nbd_UNI} implies that there exists $x_0$, $\ntrans$, inverse branches $\tilde h_1, \tilde h_2 \in \cH^{\ntrans}$; there exists a constant $C_1$;  and, for every $n \geq n_1$ and every $l_1,l_2 \in \cH^{n-n_1}$, there exists a neighbourhood $V_n$ of $x_0$ contained in $f^n(O_{l_1 \circ \tilde h_1}) \cap f^n(O_{l_2 \circ \tilde h_2})$, such that 
\begin{equation}
\abs{\left(\tau_n \circ l_1 \circ \tilde h_1 - \tau_n \circ l_2 \circ \tilde h_2\right)'(x)} > C_1 \text{ for every } x \in V_{n}.
\end{equation}

Since $f$ is covering, there exists $N(\delta)$  (recall that $\delta$ is the lower bound on the length of $I$) such that for every $n \geq N(\delta)+n_1=:n_\delta$ and every $l_1, l_2 \in \cH^{n-n_1}$ 
\[
l_j \circ \tilde h_j (V_{n}) \subset O_{l_j \circ h_j} \subset I \text{, for }j \in \{1,2\}.
\]
Note that the first inclusion is a consequence of the property that $V_n$, $n\geq n_1$, is contained in $f^n(O_{l_1 \circ \tilde h_1}) \cap f^n(O_{l_2 \circ \tilde h_2})$. Set $h_1 := l_1 \circ \tilde h_1$, $h_2 := l_2 \circ \tilde h_2$ and $I_* = V_{n}$. Then, we have
\[
\abs{\left(\tau_n \circ    h_1 - \tau_n \circ   h_2 \right)'} > C_1 \text{ for every } x \in I_*.
\]
Denote the length of $I_*$, the overlap interval, by $\ovl$. Note that $\ovl$ depends on $\delta$, $n$ and the choice of the initial interval $I$.
\end{proof}

\subsection{Transversality of standard pairs}
In this subsection we will state the transversality condition of \Cref{overlap_UNI} in terms of standard pairs. We will also get rid of the dependence of $\Delta$ on the choice of the interval $I$ using a compactness argument.

\begin{cond}[Transversality of standard pairs] \label{finite_transversality}
Consider a standard family $\cG$. For every $\delta>0$, there exists $n_\delta \in \bN$, a finite number $k:=k_\delta$ of pairs of inverse branches
\[
\{(h_{1,1},h_{1,2}) \dots (h_{k,1},h_{k,2})\} \subset  \cH^{n_\delta} \times \cH^{n_\delta} 
\]
such that for any standard pair $(I, \rho) \in \cG$, with $\abs{I} >3\delta$, the image standard family $\cG_{n_\delta}$ contains two standard pairs, obtained from the above finite collection of inverse branches, which overlap and are transversal on an interval of length no smaller than $\ovl=\ovl(k_\delta, n_\delta)$.  

More precisely, there exists $l \in \{1, \dots ,k_\delta\}$, $\ovl:=\ovl(k_\delta, n_\delta) >0$, standard pairs $(I_j, \rho_j)$, $j\in\{1,2\}$ such that there exists $U$, with $\ve_0/3 \leq \abs{U} \leq \ve_0$,\footnote{ $U$ is a choice of cutting and can be taken to be equal to $\bT^1$ if no cutting is necessary; that is, when $\abs{f^{n_\delta}(O_{h_{l,j}})}< \ve_0$.} such that
\[
I_j:=f^{n_\delta}(O_{h_{l,j}})\cap U, \rho_j := z_{ h_{l,j}}^{-1}e^{ib\tau_{n_\delta} \circ  h_{l,j}}\rho \circ h_{l,j} |( h_{l,j})'|, \text{ and }
O_{h_{l,j}}\subset I.
\]

Furthermore, $I_1 \cap I_2$ contains an interval $I_*$ of size $\ovl$ on which holds
\[
\abs{\left(\tau_{n_\delta} \circ h_{l,1}- \tau_{n_\delta} \circ  h_{l,2}\right)'} > C_1.
\]
Denote 
\begin{equation} \label{least_trans_domain}
M(n_\delta) := \min_{\stackrel{j\in\{1,2\}}{ l\in\{1, \cdots k_\delta\}}}\{\abs{O_{h_{l,j}}\cap h_{l,j}(U_j)}\}.
\end{equation}
\end{cond}

\begin{proof}
Divide the interval into subintervals of length $\delta$. Denote the finite collection of intervals by $\{J_l\}_{l=1}^{k_\delta}$. For each interval apply \Cref{overlap_UNI}. It follows that there exists $n:=n_\delta$ and finitely many inverse branches
\[
\{(h_{1,1},h_{1,2}) \dots (h_{k,1},h_{k,2})\} \subset  \cH^{n_\delta} \times \cH^{n_\delta} 
\]
such that
 $O_{h_{l,1}},O_{h_{l,2}} \subset J_l$ and $f^n(O_{h_{l,1}}) \cap f^n(O_{h_{l,2}})$ contains an interval of size $\Delta_l >0$\footnote{$\Delta_l$ depends on $\delta$ and $n_\delta$ in addition to $l$.} on which holds
\[
\abs{\left(\tau_n \circ    h_{l,1} - \tau_n \circ   h_{l,2} \right)'} > C_1.
\]
Let $\ovl = \min_{l\in \{1, \cdots, k_\delta \}} \Delta_l$. For any standard pair $(I,\rho)$ with $|I|> 3\delta$, $I$ contains at least one of the intervals $J_l$ of length $\delta$. As mentioned above, $J_l$ contains a pair of partition intervals $O_{h_{l,1}},O_{h_{l,2}}$ whose images overlap over an interval of length $\ve_0/3$ and are transversal. If these images are of length $< \ve_0$, by definition, they are the support of standard pairs:
\[
I_j:=f^n(O_{h_{l,j}}), \rho_j := z_{ h_{l,j}}^{-1}e^{ib\tau_n \circ  h_{l,j}}\rho \circ h_{l,j} |( h_{l,j})'|.
\]
However, if one of the images is of size greater than $\ve_0$, it must be shortened. In this case we may choose a cutting interval $U$, with  $\ve_0/3 \leq |U| \leq \ve_0$ that does not cut the overlap if $\ovl<\ve_0/3$. We also require that the cutting does not create other pieces of length $< \ve_0/3$. This can be done if $\ovl < \ve_0/3$ and if $\ovl \geq \ve_0/3$, we can consider a smaller overlap interval of size $< \ve_0/3$. With these considerations, we have obtained two standard pairs such that
\[
I_j:=f^n(O_{h_{l,j}})\cap U , \rho_j := z_{ h_{l,j}}^{-1}e^{ib\tau_n \circ  h_{l,j}}\rho \circ h_{l,j} |( h_{l,j})'|, 
\]
and such that $O_{h_{l,1}},O_{h_{l,2}} \subset I$ and $I_1 \cap I_2$ contains an interval of length $\ovl$ on which holds
\[
\abs{\left(\tau_n \circ    h_{l,1} - \tau_n \circ   h_{l,2} \right)'} > C_1.
\]

\end{proof}

\begin{rem}
The actual value of $3\delta$ for which the condition above is used, is determined in  \Cref{family_decay}. Also, note that for $n>n_\delta$, \Cref{finite_transversality} still holds but depends on $n$. So as long as we keep $n$ fixed, we may use \Cref{finite_transversality}, repeatedly. 
\end{rem}

\section{Weight reduction of standard families} \label{modi}

In this section our goal is to replace a standard family, after certain number of iterations, with an equivalent standard family of lower total weight. 

\begin{defin}[Equivalence]
Two standard families $\cG$ and $\tilde \cG$ are said to be \emph{equivalent} if $\rho_\cG = \rho_{\tilde\cG}$, i.e. if
\begin{equation}
\sum_{j \in \cJ} w_{\cG}(j) \rho_j = \sum_{k \in \tilde \cJ} w_{\tilde \cG}(k) \tilde \rho_k.
\end{equation}
\end{defin}

\begin{rem}
In this section we need to slightly increase the value of the parameter $a$. More precisely, we need $\left(e^{-\lambda n} + C_\tau/a + C_\kappa/a \right)\alpha_1^{-1} <1$.
This does not cause any problems since we could have chosen $a$ larger to begin with in \Cref{fixed_param}. In regards to $n$, we need $n > n_\delta$ and we choose it large enough that the above inequality holds and also $ae^{-\lambda n} < C_1/4$. Finally, we assume that $\abs{b} \geq 4\pi/(C_1 \ovl)$, where $C_1$ and $\ovl$ are related to transversality and were defined in the \Cref{trans}.
\end{rem}

\begin{lem} \label{singlepair_decay}
Suppose $\cG = \{(I, \rho)\} \in \cM_{a,b,B,\ve_0}$ is a singleton standard family with $\delta \leq \abs{I}$ for which \Cref{finite_transversality} holds. Then there exists there exists constants $\gamma>0$, $C$ such that for large $b$, letting $n_b=C\ln |b|$,  there exists a standard family $\cG_{n_b}^*$ equivalent to $\cG_{n_b}$  such that 
\[
\sum_{j \in \cJ_{n_b}^*} w_{\cG_{n_b}^*}(j) \leq  e^{-\gamma}w_\cG.
\]
\end{lem}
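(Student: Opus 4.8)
The plan is to exploit the transversality provided by \Cref{finite_transversality}: after $n_b$ iterations the family $\cG_{n_b}$ contains two standard pairs $(I_1,\rho_1)$ and $(I_2,\rho_2)$ that overlap on an interval $I_*$ of length $\ovl$ and whose phases differ by $b(\tau_{n_\delta}\circ h_{l,1}-\tau_{n_\delta}\circ h_{l,2})$, with derivative bounded below by $C_1$. The key observation is that on $I_*$ the density $\rho_{\cG_{n_b}}=\sum_j w(j)\rho_j$ receives contributions from these two pairs whose arguments rotate, relative to each other, at rate at least $|b|C_1$; since we assume $|b|\ge 4\pi/(C_1\ovl)$, this relative phase sweeps through a full period across $I_*$, forcing genuine cancellation in the sum $w(1)\rho_1+w(2)\rho_2$ on (a definite subinterval of) $I_*$.

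The first step is to make this cancellation quantitative. On $I_*$, by \eqref{eq:mod_regularity} and \eqref{eq:arg_regularity} (with $a$ enlarged as in the preceding remark so that $H(\rho_j)\le a$ and $|\arg(\rho_j)'|\le a|b|$ persist after $n_b$ iterations), each $\rho_j$ is, up to a factor $e^{\pm(a\ovl^\alpha + a|b|\ovl)}$, a constant-modulus constant-phase function \emph{after} factoring out the fast oscillation $e^{ib(\tau_{n_\delta}\circ h_{l,j})}$ — except that this naive bound blows up because $a|b|\ovl$ is large. So instead I would not factor out the whole phase but compare the two pairs directly: write $w(1)\rho_1 + w(2)\rho_2$ on $I_*$ and bound its integral of modulus. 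Using $ae^{-\lambda n_b}<C_1/4$ (the choice of $n$ in the remark), the individual arguments $\arg(\rho_j)$ vary slowly on scale $C_1$, while their difference varies by at least $C_1|b|\ovl/2\ge 2\pi$ across $I_*$; a standard oscillatory-integral / triangle-inequality estimate then gives $\int_{I_*}|w(1)\rho_1+w(2)\rho_2| \le (1-c)\bigl(\int_{I_*}|w(1)\rho_1| + \int_{I_*}|w(2)\rho_2|\bigr)$ for an explicit $c=c(a,C_1,\ovl)>0$ independent of $b$.

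The second step is to convert this pointwise gain on $I_*$ into a gain in total weight and then re-package as a genuine standard family $\cG_{n_b}^*$. Replace the two pairs $(I_1,\rho_1),(I_2,\rho_2)$ by: their restrictions to $I_1\setminus I_*$ and $I_2\setminus I_*$ (suitably renormalized), plus a single new pair supported on $I_*$ with density proportional to $w(1)\rho_1+w(2)\rho_2$. Leave all other pairs of $\cG_{n_b}$ untouched. By construction $\rho_{\cG_{n_b}^*}=\rho_{\cG_{n_b}}$, so the families are equivalent. The total weight of the new $I_*$-pair is $\int_{I_*}|w(1)\rho_1+w(2)\rho_2|$, which by step one is at most $(1-c)$ times the weight $w(1)\int_{I_*}|\rho_1| + w(2)\int_{I_*}|\rho_2|$ that those portions carried in $\cG_{n_b}$; using \Cref{Fed} and $|I_*|=\ovl$, the latter is $\asymp_a \ovl$ times the densities, which is a definite fraction of the total weights $w(1),w(2)$. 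Since $w(1),w(2)$ are in turn a definite fraction of $w_\cG=|\cG_{n_b}|$ — here is where $M(n_\delta)$ from \eqref{least_trans_domain} and the distortion bound \eqref{eq:int_dist} enter, guaranteeing $w(j)\gtrsim_a M(n_\delta)\cdot(\text{something uniform})\cdot w_\cG$ — we gain a fixed proportion of the total weight, i.e. $\sum w_{\cG_{n_b}^*} \le e^{-\gamma}w_\cG$ for an explicit $\gamma>0$ depending only on $a,C_1,\ovl,M(n_\delta),n_\delta$ (all fixed once $\delta$ is fixed), hence independent of $b$. One must check the new pairs genuinely satisfy \eqref{eq:width}--\eqref{eq:arg_regularity}: width is fine since $\ovl\le\ve_0$ and we may shrink $I_*$; $H$ and $\arg'$ of the new $I_*$-pair require that $w(1)\rho_1+w(2)\rho_2$ not be too wild, which follows because on the subinterval where cancellation is extracted the two terms are comparable in modulus and the sum's log-modulus and argument inherit Hölder/Lipschitz bounds (with constants enlarged by a factor absorbed into the enlarged $a$ of the remark). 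The value of $C$ in $n_b=C\ln|b|$ is forced by needing $n_b\ge n_\delta$ \emph{and} (crucially, for later use) by needing the chopping count $2^{n_b}$ and similar polynomial-in-$|b|$ losses to be controllable — but for this lemma in isolation, any $C$ with $C\ln|b|>n_\delta$ works, with $n_\delta$ chosen via \Cref{finite_transversality} for the relevant $\delta$.

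The main obstacle I anticipate is the interplay between the \emph{large} parameter $a|b|$ bounding $|\arg(\rho_j)'|$ and the need for a cancellation estimate with a constant $c$ that does \emph{not} degrade as $|b|\to\infty$. The resolution is that one never needs each $\arg(\rho_j)$ to be slowly varying on scale $1$; one only needs the relative phase to complete $\gtrsim 1$ full turns across $I_*$ while the \emph{ratio} $|\rho_1|/|\rho_2|$ and the individual phase \emph{accelerations} stay controlled — and $ae^{-\lambda n_b}<C_1/4$ together with $H(\rho_j)\le a$ supply exactly that, since after $n_b$ iterations the "non-$\tau$" part of the phase has derivative $\le ae^{-\lambda n_b}<C_1/4 \ll C_1|b|$. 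Making the bookkeeping of these comparable-modulus requirements clean on the right subinterval of $I_*$ (one may have to pass to a sub-subinterval where $|\rho_1|\asymp_a|\rho_2|$, using that both are within $e^{\pm a}$ of their averages by \Cref{Fed} and the weights $w(1),w(2)$ are themselves comparable up to the uniform constant coming from $M(n_\delta)$) is the technically delicate point, but it is routine once framed this way.
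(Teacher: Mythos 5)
Your overall plan — two transversal pairs on $I_*$, relative phase sweeping a full period, extract cancellation, repackage as an equivalent lighter family — is the right intuition and matches the paper's strategy at the top level. But the core construction you propose does not work as stated, and the reason it fails is precisely what the paper's extra machinery is for.

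You propose to replace $(I_1,\rho_1),(I_2,\rho_2)$ by their restrictions to $I_j\setminus I_*$ together with a single new pair on $I_*$ proportional to $w_1\rho_1+w_2\rho_2$. The problem is that $w_1\rho_1+w_2\rho_2$ is generically \emph{not} a standard pair, and there is no way to absorb this into an enlarged $a$. The relative phase $\Theta_b$ passes through odd multiples of $\pi$ inside $I_*$; wherever $w_1\lvert\rho_1\rvert$ and $w_2\lvert\rho_2\rvert$ are close (which \Cref{Fed} and the comparability of $w_1,w_2$ make unavoidable), the sum's modulus dips toward zero, so $\ln\lvert w_1\rho_1+w_2\rho_2\rvert$ has unbounded local oscillation and $\arg(w_1\rho_1+w_2\rho_2)$ has essentially unbounded derivative near the near-zero. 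No fixed $a$ controls this. Passing to a sub-subinterval where $\lvert\rho_1\rvert\asymp_a\lvert\rho_2\rvert$ does not help — that is the worst case, not a good one. The paper avoids the near-zero entirely by first splitting $\rho_j=\bar\rho_j+\tilde\rho_j$ with $\bar\rho_j=c\,e^{i\Theta_j}$ of \emph{constant} modulus $c=e^{-a}/2$, setting the remainders $\tilde\rho_j$ aside, and then only modifying the $\bar\rho_j$ by the smooth weight transfer $\bar\rho_{1*}=\kappa\bar\rho_1$, $\bar\rho_{2*}=\bar\rho_2+(1-\kappa)(w_1/w_2)\bar\rho_1$ with $\kappa\in[1-\kappa_0,1]$ and $\kappa_0=w_2/(2w_1)\leq 1/2$. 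The left inequality of \eqref{eq:cancelation} then gives a lower bound $\lvert\bar\rho_{2*}\rvert\geq\alpha_1 c(\kappa_0 w_1+w_2)/w_2$ that never degenerates, so $H(\bar\rho_{j*})$ and $\lvert\arg(\bar\rho_{j*})'\rvert$ stay $\leq a\lvert b\rvert$. That lower bound is exactly what your scheme lacks. In addition, cutting at $\partial I_*$ creates new endpoints and spoils the boundary estimate \eqref{eq:bd_def}; the paper's modified pairs are defined on all of $I_j$ and agree with $\bar\rho_j$ outside $J_m$, so no new boundaries appear and $\lvert\partial_\ve\cG_n^*\rvert\leq C_*\lvert\partial_\ve\cG_n\rvert$ follows.

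Your explanation of $n_b=C\ln\lvert b\rvert$ is also off. For $\lvert b\rvert$ large, $C\ln\lvert b\rvert>n_\delta$ is automatic, so that constraint is vacuous; and the chopping count $2^n$ is fixed once $n$ is, independently of $b$. The real reason is that even after the paper's careful modification, the resulting pairs satisfy only $H(\rho)\leq a\lvert b\rvert$, not $H(\rho)\leq a$. One then needs $\tilde n_b\approx\lambda^{-1}\ln\lvert b\rvert$ further iterations of the expanding map, using \eqref{eq:mod_reg_1}, to contract the log-modulus H\"older constant back below $a$ so that $\cG_{n_b}^*\in\cM_{a,b,B,\ve_0}$. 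This smoothing step is the source of the $\ln\lvert b\rvert$ in the exponent, and hence of the stretched-exponential (rather than exponential) final rate; your proposal omits it.
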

\begin{proof}
Let $(I_1,\rho_1),(I_2,\rho_2) \in \cG_{n}$ be the transversal standard pairs provided by \Cref{finite_transversality} applied to $\cG =\{(I, \rho)\}$.  Let $w_1=w_{\cG_{ n}}(h_1)$ and $w_2=w_{\cG_{ n}}(h_2)$ be the weights of these standard pairs.\footnote{We are using $\cH^{ n}$ for the index set of $\cG_{ n}$ to keep the notation simpler. To be precise, write $w_{\cG_{ n}}(j_n)$ with $j_{ n} \in \cJ_n$ as defined above.} Let $I_*$ be the interval of length $\ovl$ on which \Cref{finite_transversality} holds.  Let $\theta_1=\arg(\rho \circ   h_1)$ and $\theta_2=\arg(\rho \circ   h_2)$. Then, on the interval $I_*$, we may write $\abs{w_1\rho_1 + w_2\rho_2} = \abs{e^{i \Theta_b}w_1\abs{\rho_1} + w_2\abs{\rho_2}}$,
where 
\begin{equation}
\Theta_b= b(\tau_n \circ   h_1 - \tau_n \circ   h_2) - (\theta_1 - \theta_2).
\end{equation}

Our goal is to take out $\rho_1$ and $\rho_2$ from the family $\cG_n$ and replace them with other standard pairs (formed by combining parts of $\rho_1$ and $\rho_2$) and obtain a standard family $\cG_n^*$ which is still equivalent to $\cG_n$, but has a total weight strictly less than that of $\cG_n$. 

We will first show that the phase difference $\Theta_b$ grows at a certain rate.
\begin{claim}[Full phase oscillation]
For large $n$, there exists $C_1$ such that for $b \neq 0$, on the interval $I_*$, holds
\begin{equation} \label{eq:phase_bounds}
\frac{|b|C_1}{2} \leq \abs{\Theta_b'} \leq 2|b|(C_\tau +C_1) .
\end{equation}
 \end{claim}

\begin{proof}
Note that by \eqref{eq:arg_regularity}, on $I_*$,
\begin{equation} \label{eq:internal_phase}
\begin{split}
\abs{\theta_1'}&=\abs{\arg {(\rho \circ   h_1)}'} \leq a|b|h_1'| \leq a|b| e^{-\lambda n},\\
\abs{\theta_2'} &\leq a|b| e^{-\lambda n}. 
\end{split}
\end{equation}
Choose $n$ large enough\footnote{In addition to previous constraints.} that 
\begin{equation}
ae^{-\lambda n} < C_1/4.
\end{equation} 
Then,  $\abs{\theta_1 - \theta_2} < |b|C_1/2$. Also, \Cref{finite_transversality} implies $\abs{b(\tau_n \circ   h_1 - \tau_n \circ   h_2)'}> |b|C_1$ hence $\abs{\Theta_b'} > |b|C_1/2$. Finally, a simple estimate shows that  $\abs{\Theta_b'} \leq 2|b|(C_\tau +C_1)$. 
\end{proof}

Note that since $\Theta_b$ is $\sC^1$ and satisfies the bounds \eqref{eq:phase_bounds}, $\Theta_b'$ does not change sign in $I_*$.  Divide the range of $\Theta_b$ into intervals of length between $2\pi$ and $3\pi$, then the bounds on $\Theta_b'$ imply that $I_*$ will be divided into corresponding intervals $I_m$ of length $K_1 |b|^{-1}\leq \abs{I_m} \leq K_2 |b|^{-1}$, where $K_1:=\pi/(C_\tau+C_1)$ and $K_2:=6\pi/C_1$. To clarify, these are intervals on which $\Theta_b$ makes one full oscillation, but less than one and a half full oscillations. Of course we must make sure $I_*$ is large enough to fit at least one such interval $I_m$. That is we need $K_1\abs{b}^{-1} \leq \abs{I_*}$. This can be accomplished by choosing $b$ large enough:\footnote{This is the only restriction on $b$.} 
\begin{equation}\label{eq:b_0}
\abs{b} \geq K_1/\abs{I_*}=4\pi/(C_1\ovl):=b_0.
\end{equation}

We like to combine some part of $\rho_1$ and $\rho_2$ to take advantage of their cancellation. Since the modulus of these standard pairs are not smooth, if we combine them blindly, we might lose the $\sC^1$-smoothness required for the argument of a standard pair. For this reason we do the following splitting of the standard pairs into good parts, with a constant modulus, which we can combine; and bad parts, which we do not combine in this round. 

For a function $\rho\in \L^1(I,\bC)$ with $\int_I\abs{\rho} \neq 0$, denote $N(\rho)=\rho/\int_I\abs{\rho}$. For $j\in \{1,2\}$, we split $(I_j,\rho_j)$ into two standard pairs $
\left(I_j,N(\tilde\rho_j) \right) $, $\left(I_j,N(\bar \rho_j) \right)$, such that:
\begin{equation}\label{eq:split}
\bar \rho_j = c e^{i \Theta_j},
\tilde\rho_j = (\abs{\rho_j}-c)e^{i\Theta_j}, \text{ where } c= \frac{e^{-a}}{2}, \Theta_j = b(\tau_n \circ h_j)+ \theta_j.
\end{equation}
Associate to them the weights $\bar w_j = w_j \int_{I_j}\abs{\bar \rho_j}$, $\tilde w_j = w_j \int_{I_j}\abs{\tilde \rho_j}$.
\begin{claim}[After splitting]
For $j \in \{1,2\}$,
\begin{eqnarray}
\rho_j &=& \bar \rho_j + \tilde \rho_j\\
w_j &=& \bar w_j + \tilde w_j \\
w_j \rho_j &=&  \bar w_j N(\bar \rho_j)+ \tilde w_j N(\tilde \rho_j)\\
H(\bar \rho_j) &\leq& a \\
H(\tilde \rho_j) &\leq& 4a  \label{eq:remainder_reg}\\
\abs{\Theta_j'} &\leq& a|b| \left(e^{-\lambda n} + \frac{C_\tau}{a}\right) \label{eq:new_phase}
\end{eqnarray}
\end{claim}

\begin{proof}
The first four statements are easy to prove. To prove  \eqref{eq:remainder_reg}, note that $c =(1/2)e^{-a} \leq (1/2)\inf \abs{\rho_j}$.
Therefore,
\begin{equation}\label{eq:trick}
\frac{\abs{\abs{\rho_j(x)}-c}}{\abs{\abs{\rho_j(y)}-c}}\leq  \frac{\abs{\abs{\rho_j(x)}-\abs{\rho_j(y)}}+ \abs{\rho_j(y)} - c}{\abs{\rho_j(y)}-c} = 1 +  \frac{\abs{\abs{\rho_j(x)}-\abs{\rho_j(y)}}}{\abs{\rho_j(y)}-c}
\end{equation}
But, $\abs{\rho_j(y)} - (1/2) \abs{\rho_j(y)} = (1/2)\abs{\rho_j(y)}  \geq \inf\abs{\rho_j} \geq c$. Hence, $
\abs{\rho_j(y)} - c \geq \frac{1}{2}\abs{\rho_j(y)}$,
and we have:
\begin{equation}
\begin{split}
\frac{\abs{\abs{\rho_j(x)}-c}}{\abs{\abs{\rho_j(y)}-c}} &\leq 1 + 2\frac{\abs{\abs{\rho_j(x)}-\abs{\rho_j(y)}}}{\abs{\rho_j(y)}} \leq 1+ 2\abs{\frac{\abs{\rho_j(x)}}{\abs{\rho_j(y)}}- 1}\\
&\leq 1 + 2 \abs{e^{a\abs{x-y}^\alpha} - 1} = 1+ 2 (2a|x-y|^\alpha)\\
&\leq  e^{4a\abs{x-y}^\alpha}.
\end{split}
\end{equation}
The inequality \eqref{eq:new_phase} follows from \eqref{eq:internal_phase} and \eqref{eq:roof_regularity}.
\end{proof}
We now describe how to combine $\bar\rho_1$ and $\bar\rho_2$. Note that the modulus of these functions is constant and equal to $c$. We need the following result.
\begin{claim}[$J_m$. Preparing for a controlled cancellation of $\bar \rho_1$ and $\bar \rho_2$] \label{cases}
Suppose $w_2 \leq w_1$.\footnote{Otherwise, interchange indices and do the same proof.} For every $\alpha_1 \in (0,1/2]$ and $\alpha_2 \in [(\sqrt{7}-1)/2,1)$, for every $m$, there exists a subinterval $J_m \subset I_m$ with $K_3|b|^{-1}\leq \abs{J_m} \leq K_4\abs{b}^{-1}$ such that for every $\kappa_0 \geq w_2/(2w_1)$
\begin{equation} \label{eq:cancelation}
\alpha_1 c\left(\kappa_0 w_1+w_2 \right) \leq \abs{\kappa_0 w_1\bar \rho_1+w_2 \bar \rho_2} \leq c(\kappa_0  w_1 + \alpha_2 w_2).
\end{equation}
\end{claim}

\begin{proof}
Choose $K_3, K_4$ such that $1/4 \leq \cos(\Theta_b) \leq 1/2$  on $J_m$. This can be done because the phase difference $\Theta_b$ makes a full oscillation in $I_m$. 
The left side of \eqref{eq:cancelation} is easy to prove and does not require a restriction on $\kappa_0$. Let us prove the right side.

Note that, on one hand, using $\cos(\Theta_b) \leq 1/2$,
\[
\begin{split}
\abs{\kappa_0 w_1\bar \rho_1+w_2 \bar \rho_2}^2 &= \kappa_0^2w_1^2\bar\rho_1^2+w_2^2\bar\rho_2^2+ 2\kappa_0 w_1w_2\abs{\bar\rho_1}\abs{\bar \rho_2} \cos(\Theta_b)\\
&=c^2\left(\kappa_0^2w_1^2+w_2^2+ 2\kappa_0 w_1w_2 \cos(\Theta_b)\right) \\
&\leq c^2\left(\kappa_0^2w_1^2+w_2^2+ \kappa_0 w_1w_2 \right).
\end{split}
\]
On the other hand,
\[
\left(c( \kappa_0  w_1 + \alpha_2 w_2)\right)^2 = c^2\left(\kappa_0^2w_1^2+ \alpha_2^2 w_2^2 + 2 \alpha_2\kappa_0 w_1 w_2\right).
\]
Hence it suffices to show
\[
0 \leq  w_2 (\alpha_2^2 - 1) + \kappa_0 w_1(2\alpha_2 - 1).
\]
Solving for $\alpha_2$, it suffices to show
\begin{equation}\label{eq:alpha_2}
\alpha_2 \geq \sqrt{1+\frac{\kappa_0 w_1}{ w_2} + \left(\frac{\kappa_0 w_1}{w_2}\right)^2}- \frac{\kappa_0 w_1}{w_2}.
\end{equation}
If $w_2 / (2w_1) \leq \kappa_0 $, then  $ (\kappa_0 w_1)/w_2 \geq 1/2$. From the graph of $x\mapsto \sqrt{1+x+x^2}-x$, for $x \in [0,\infty)$, it follows that the right hand side of \eqref{eq:alpha_2} is at most $(\sqrt{7}-1)/2$. Therefore \eqref{eq:alpha_2} is satisfied for any $\alpha_2 \geq (\sqrt{7}-1)/2$.
\end{proof}

Fix $\kappa_0=w_2/(2w_1)$. Choose a smooth function $\kappa \in \sC^1(I_*, [1-\kappa_0,1])$ such that $\kappa =1-\kappa_0$ on the middle third of $J_m$, denoted $J_m'$, and  $\kappa = 1$ outside $J_m$. Note that, taking the length of a connected component of $J_m\setminus J_m'$ into account, $\kappa$ can be chosen such that $\abs{\kappa'}<C_\kappa \kappa_0 |b|$ on $I_m$.  Define\footnote{Note that we are assuming $w_2 \leq w_1$.} 
\begin{equation}\label{eq:new_pairs}
\bar\rho_{1*}:= \kappa \bar \rho_{1} \text{, and } \bar\rho_{2*} := \bar \rho_{2} + (1-\kappa)(w_1/w_2)\bar \rho_{1}.
\end{equation}
The domain of the definition above is the overlap interval $I_*$; however, for $j\in \{1,2\}$, we may extend the domain of $\bar\rho_{j*}$ to the interval $I_j$ so that $\bar\rho_{j*} = \bar\rho_j$. This should be clear from the definition of $\kappa$ and \eqref{eq:new_pairs}. We intend to replace $\bar \rho_1, \bar\rho_2$ with $\bar \rho_{1*}, \bar \rho_{2*}$. We will not touch $\tilde \rho_1, \tilde \rho_2$ except to normalize them.

Define a new family
\begin{equation}
\begin{split}
\cG_{n}^* &:=\left(\cG_{ n} \setminus \{(I_1,\rho_1), (I_2,\rho_2)\} \right) \cup \\ & \left\{\left(I_1,N(\bar\rho_{1*})\right), \left(I_2,N(\bar\rho_{2*})\right),\left(I_1,N(\tilde\rho_{1})\right), \left(I_2N(\tilde\rho_{2}\right)\right\}.
\end{split}
\end{equation}
with associated weight measure $w_{\cG_n^*}$ that is the same as $w_{\cG_n}$ except for the modified standard pairs. For the modified standard pairs, define the new weights by $\bar w_{1*} := w_1\int_{I_1}\abs{\bar \rho_{1*}}$, $\bar w_{2*} :=  w_2 \int_{I_2}\abs{\bar \rho_{2*}}$, $\tilde w_{1} :=  w_1\int_{I_1}\abs{\tilde \rho_{1}}$, $\tilde w_{2} := w_2 \int_{I_2}\abs{\tilde\rho_{2}}$.

Now we check that the new collection $\cG_n^*$ is a standard family equivalent to $\cG_n$.
\begin{claim}[After cancellation]
We have:
\begin{eqnarray}
w_1\rho_1 + w_2 \rho_2 &=& w_1 \bar \rho_{1*} + w_1 \tilde \rho_1 + w_2 \bar \rho_{2*} + w_2 \tilde \rho_2. \label{eq:equival}\\
H(\bar \rho_{j*}) &\leq& a |b|, \forall j \in \{1,2\} \label{eq:mod_reg_new}\\
\abs{\Theta_{j*}'} &\leq& a|b|, \forall j \in \{1,2\} \label{eq:arg_reg_new}\\
\abs{\partial_\ve \cG_n^*} &\leq& C_* \abs{\partial_\ve\cG_n} \label{eq:bd_of_mod}.
\end{eqnarray}
\end{claim}

\begin{proof}
The equality \eqref{eq:equival} follows from definition. Indeed, \eqref{eq:new_pairs} implies
\[
w_1\bar \rho_{1*} + w_2 \bar \rho_{2*}=w_1\bar \rho_1 + w_2 \bar \rho_2,
\]
which in turn implies  \eqref{eq:equival}. Hence $\cG_n^*$ and $\cG_n$ are equivalent.

To prove  \eqref{eq:mod_reg_new}, note that by construction $\bar\rho_{1*}, \bar\rho_{2*}$ are $\sC^1$. Hence it suffices to show $\abs{\bar\rho_{j*}'} \leq a|b| \abs{\bar\rho_{j*}}$. 

For $\bar\rho_{1*}$ this condition is easier to check than for $\bar\rho_{2*}$. Let us check, for $\bar\rho_{2*}$ the stronger condition: $\abs{\bar\rho_{2*}'} \leq a|b| \abs{\bar\rho_{2*}}$ for $a$ and $n$ large enough.\footnote{The previous choices of $a$ and $n$ need to be updated.} Outside $J_m$, $\bar \rho_{2*}$  satisfies this condition because $\bar\rho_{2}$ does. On $J_m$, differentiating $\bar\rho_{2*}$ and using \eqref{eq:split} and \eqref{eq:new_phase} yield, 
\[
\begin{split}
\abs{\bar\rho_{2*}'} &\leq c \left( \abs{\Theta_2'} + \abs{1- \kappa}\frac{w_1}{w_2} \abs{\Theta_1'}\right) + \frac{w_1}{w_2}\abs{\kappa'}\abs{\rho_{1}} \\
&\leq a|b|\left( e^{-\lambda n} + \frac{C_\tau}{a}\right) c\left(1 + \abs{1- \kappa}\frac{w_1}{w_2} \right) + c  \frac{w_1}{w_2}\abs{\kappa'}.
\end{split}
\]
Now observe that since $K_3|b|^{-1}\leq \abs{J_m} \leq K_4|b|^{-1}$  and  $1-w_2/(2w_1) \leq \kappa \leq 1$, we have $\abs{\kappa'}< C_\kappa w_2/(2w_1)|b|$. Also, $\abs{1-\kappa} \leq w_2/(2w_1)$. Therefore,
\[
\frac{w_1}{w_2}\abs{\kappa'} \leq C_\kappa \left(1+\abs{1-\kappa}\frac{w_1}{w_2}\right)|b|
\]
Therefore,
\[
\begin{split}
\abs{\bar\rho_{2*}'} &\leq a|b|\left( e^{-\lambda n} + \frac{C_\tau}{a} + \frac{C_\kappa}{a}\right)c\left(1 + \abs{1- \kappa}\frac{w_1}{w_2} \right) \\
&\leq a|b|\left( e^{-\lambda n} + \frac{C_\tau}{a} + \frac{C_\kappa}{a}\right) \frac{\abs{\bar\rho_{2*}}}{\alpha_1},
\end{split}
\]
where the last inequality follows from the left hand side of \eqref{eq:cancelation}. Recall that the left hand side of \eqref{eq:cancelation} requires no restriction on $\kappa_0$. It simply follows from the phase difference satisfying $\cos(\Theta_b)=\cos (\Theta_1-\Theta_2) \geq 1/4$ and $\alpha_1 < 1/2$.
Take $a,n$ large to conclude.

The inequality \eqref{eq:arg_reg_new} is a consequence of $\abs{\bar \rho_{j*}'} \leq  a|b| \abs{\bar\rho_{j*}}$ since $\abs{\Theta_{j*}'} \leq \abs{\bar \rho_{j*}'}/\abs{\bar\rho_{j*}}$.

To prove \eqref{eq:bd_of_mod}, note that
\[
\abs{\partial_\ve \cG_n^*} \leq \abs{\partial_\ve\cG_n} + \int_{\partial_\ve I_1}w_1 \abs{\tilde \rho_1}  + \int_{\partial_\ve I_2}w_2 \abs{\tilde \rho_2}  + \int_{\partial_\ve I_1}w_1\abs{\bar\rho_{1*}}   + \int_{\partial_\ve I_2}w_2\abs{\bar\rho_{2*}}.
\]
The first three terms are simply bounded by $3\abs{\partial_\ve \cG_n}$. The last two terms are bounded by $\abs{\partial_\ve \cG_n}+ \int_{\partial_\ve I_*} (w_1 \abs{\bar \rho_{1*}} + w_2 \abs{\bar \rho_{2*}})$. Note that  using \eqref{eq:new_pairs}, $w_1 \abs{\bar \rho_{1*}} + w_2 \abs{\bar \rho_{2*}} \leq w_1\abs{\bar \rho_1}+ w_2\abs{\bar \rho_1}$. Putting all this together and noting that $\bar\rho_1=\bar\rho_2=c$, we have
\[ 
\begin{split} 
\abs{\partial_\ve \cG_n^*} &\leq 3\abs{\partial_\ve\cG_n} + \abs{\partial_\ve\cG_n}+ \int_{\partial_\ve I_*}(w_2\abs{\bar\rho_{2}} +w_1 \abs{\bar \rho_1} ) \\
&\leq 4\abs{\partial_\ve\cG_n} + c \ve (w_1+w_2)\\
&\leq 5c \abs{\partial_\ve\cG_n}.
\end{split}
\]
\end{proof}

Let us check that the total weight of the new standard family is less than the old one. 
\begin{claim}
$
\sum_{j \in \cJ_{n_b}'} w_{\cG_{n_b}'}(j) \leq  e^{-\gamma}w_\cG.
$
\end{claim}
\begin{proof}
First, note that on  $J_m'$, $\kappa = 1- \kappa_0$. Therefore on $J_m'$ (and using the right hand side of \eqref{eq:cancelation}) we have
\[
\begin{split}
w_1\abs{\bar \rho_{1*}} + w_2\abs{\bar \rho_{2*}} &\leq w_1 (1-\kappa_0) \abs{\bar \rho_{1}}+ w_2\abs{\bar \rho_{2} + \kappa_0\frac{w_1}{w_2}\bar \rho_{1}}\\
&\leq w_1 (1-\kappa_0) \abs{\bar \rho_{1}}+ \alpha_2 w_2\abs{\bar \rho_{2}} + w_1\kappa_0\abs{\bar \rho_{1}} \\
&= w_1 \abs{\bar \rho_1} + \alpha_2 w_2 \abs{\bar\rho_2}.
\end{split}
\] 
Outside $J_m'$, by definition, $w_1\abs{\bar \rho_{1*}}+w_2\abs{\bar \rho_{2*}} \leq w_1\abs{\bar \rho_{1}}+w_2\abs{\bar \rho_{2}}$.

Then, 
\[
\begin{split}
\int _{I_*} \left(w_1\abs{\bar \rho_{1*}} + w_2\abs{\bar \rho_{2*}}  \right)  &\leq \sum_{m } \int_{I_m \setminus J_m'} \left(w_1 \abs{\bar \rho_1} + w_2 \abs{\bar \rho_2}\right)  + \int_{J_m'} \left( w_1\abs{\bar \rho_{1}} + \alpha_2 w_2\abs{\bar \rho_{2}} \right)  \\
&=  \int_{I_*}w_1 \abs{\bar \rho_1}  + \sum_{m }\int_{I_m \setminus J_m'}w_2 \abs{\bar \rho_2} + \int_{J_m'} \alpha_2 w_2\abs{\bar \rho_{2}}\\
&=  \int_{I_*}w_1 \abs{\bar \rho_1}  + \sum_{m }\int_{I_m}w_2 \abs{\bar \rho_2} -(1-\alpha_2) \int_{J_m'} w_2\abs{\bar \rho_{2}} . 
\end{split}
\]
Since $K_1|b|^{-1}\leq \abs{I_m} \leq K_2|b|^{-1}$ and $(1/3)K_3|b|^{-1}\leq \abs{J_m'} \leq (1/3)K_4|b|^{-1}$, and $\abs{\bar \rho_2}$ is a constant, \Cref{Fed} implies that $\int_{J_m'} w_2\abs{\bar \rho_{2}} \geq (\abs{J_m'}/\abs{I_m}) \int_{I_m}w_2 \abs{\bar \rho_2} \geq K_3/(3K_2) \int_{I_m}w_2 \abs{\bar \rho_2}$. Therefore, 
\[
\int _{I_*} \left(w_1\abs{\bar \rho_{1*}} + w_2\abs{\bar \rho_{2*}}  \right)  \leq \int_{I_*}w_1 \abs{\bar \rho_1}  + \sum_{m }\int_{I_m}w_2 \abs{\bar \rho_2} \left(1-(1-\alpha_2)K_3/(3K_2) \right)
\]
Set $\alpha_2':= 1-(1-\alpha_2)K_3/(3K_2)$. Note that $0< \alpha_2'<1$ and it does not depend on $m$. Pulling it out of the integral, we have
\[
\int _{I_*} \left(w_1\abs{\bar \rho_{1*}} + w_2\abs{\bar \rho_{2*}}  \right)  \leq  \int_{I_* } w_1 \abs{\bar\rho_1}  +  \alpha_{2}'\int_{I_*}w_2 \abs{\bar \rho_2}  . 
\]

Add $\int_{I_1 \setminus I_*}w_1\abs{\bar \rho_{1*}}  + \int_{I_2 \setminus I_*}w_2\abs{\bar \rho_{2*}}  =  \int_{I_1\setminus I_*}w_1\abs{\bar\rho_{1}}  + \int_{I_2 \setminus I_*}w_2\abs{\bar \rho_2} $ to the above inequality and apply the same argument (using \Cref{Fed}) to conclude that there exists $0<\alpha_2''<1$ such that
\[
\bar w_{1*} + \bar w_{2*} = w_1\int _{I_1} \abs{\bar \rho_{1*}}  + w_2\int_{I_2}\abs{\bar \rho_{2*}}  \leq   w_1\int_{I_1}\abs{\bar \rho_1} + \alpha_2'' w_2\int_{I_2}\abs{\bar \rho_2}  = \bar w_1 + \alpha_2'' \bar w_2. 
\]
 
Observe that since $\alpha_2''<1$ and the rest of the standard pairs in the family were not modified, the total weight of the new standard family is less than the original one. Estimating the total weight more precisely, for large $n$, 
\[
\begin{split}
|\cG_n^*|  &=
 |\cG_n|-(w_1+w_2) + \bar w_{1*} + \tilde w_1  +\bar w_{2*} + \tilde w_2 \\
&\leq |\cG_n|-(w_1+w_2)  + \bar w_{1} + \tilde w_1 + \alpha_2'' \bar w_{2} + \tilde w_2 \\
&\leq |\cG_n| -  (1- \alpha_2'')\bar w_2  \\
&\leq w_{\cG} -  (1- \alpha_2'')CM(n) w_\cG  \\
&\leq  e^{-\gamma}  w_{\cG}. 
\end{split}
\]
Recall that $w_{\cG}$ is the weight of the original standard family consisting of a single standard pair. In the next to last inequality we used the lower bound on $w_2$.  Indeed, by definition of $\bar w_2$, \Cref{iteration}, change of variables, and finally the definition of $M(n)$ (see \Cref{finite_transversality}), we have
\[
\begin{split}
\bar w_2 = w_2 \int_{I_2} |\bar \rho_2| =  c |I_2|w_2 &\geq c|I_2|w \int_{I_2} \abs{\rho} \circ h_2 \abs{h_2'} \\
&\geq c \ovl w \inf_I \abs{\rho} \abs{(I \cap O_{h_2}) \cap h_2(U_\ell)}\\
&\geq c \ovl w\inf_I \abs{\rho}M(n) := CM(n) w_\cG.
\end{split}
\]
\end{proof}

There is one last issue to resolve. The members of $\cG_{n}^*$ may not satisfy $H(\rho) \leq a $. In order to achieve this, we simply iterate the family for a time $C\ln|b|$. Indeed, suppose $(I, \rho) \in \cG_{n}^*$. Note that  $H(\rho) \leq a |b|$. Following the proof of property \eqref{eq:mod_reg_1} in \Cref{invariance}, note that after $\tilde n$ more iterations, every image pair $\tilde \rho \in \cG_{n+\tilde n}^*$ satisfies:
\[
H(\tilde \rho) \leq a \left(|b| e^{-\lambda \tilde n} + \frac{D}{a}\right).
\]
Now, it is clear that, for $b \geq b_0$ if 
$\tilde n > \lambda^{-1}\ln\left( |b|(1-Da^{-1})^{-1}\right)=:\tilde n_b$,
then $H(\tilde\rho) \leq a$. Finally, note that if $C$ is chosen large enough, then $n_b:=C\ln|b|$ dominates $n+\tilde n_b$. We have shown the existence of the standard family $\cG_{n_b}^*$ as claimed in \Cref{singlepair_decay}. \end{proof}

\begin{prop} \label{family_decay}
There exists $0< \gamma_1 < \gamma $ and for $|b| \geq b_0$ there exists $n_b=C \ln|b|$, such that for any standard probability family $\cG \in \cM_{a,b,B,\ve_0}$, there exists a standard family $\cG_{n_b}^* \in \cM_{a,b,B,\ve_0}$ equivalent to $\cG_{n_b}$ such that $\abs{\cG_{n_b}^*} \leq e^{-\gamma_1}$.
\end{prop}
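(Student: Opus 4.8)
The plan is to upgrade \Cref{singlepair_decay}, which handles a single standard pair, to an arbitrary standard probability family by decomposing the family into a ``long-interval'' part, to which the weight-reduction mechanism applies, and a ``short-interval'' part, which contributes only a small fraction of the total weight because of the growth estimate of \Cref{iterated_growth_lemma}. First I would iterate $\cG$ a fixed number of steps (absorbed into the final $n_b=C\ln|b|$) so that, by \Cref{invariance} and the choice of parameters in \Cref{fixed_param}, the iterated family again lies in $\cM_{a,b,B,\ve_0}$; then for a suitable $\delta>0$ (the $3\delta$ threshold of \Cref{finite_transversality}, whose precise value is fixed here) split the index set $\cJ$ into $\cJ_{\text{long}}=\{j: \abs{I_j}>3\delta\}$ and $\cJ_{\text{short}}=\{j:\abs{I_j}\le 3\delta\}$, writing $\rho_\cG=\rho_{\cG_{\text{long}}}+\rho_{\cG_{\text{short}}}$ with $\abs{\cG}=\abs{\cG_{\text{long}}}+\abs{\cG_{\text{short}}}=1$.

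Next I would control the short part. Since $\cG$ is a standard probability family in $\cM_{a,b,B,\ve_0}$, \eqref{eq:bd_def} gives $\abs{\partial_\ve\cG}\le B\ve$; applying this with $\ve=3\delta$ and using \eqref{eq:comp1}/\Cref{Fed} (every standard pair with $\abs{I_j}\le 3\delta$ has $\partial_{3\delta}I_j=I_j$, so $\int_{\partial_{3\delta}I_j}\abs{\rho_j}=1$) yields $\abs{\cG_{\text{short}}}=\sum_{j\in\cJ_{\text{short}}}w_\cG(j)\le B\cdot 3\delta$. Choosing $\delta$ small relative to $1/B$ makes $\abs{\cG_{\text{short}}}\le\eta_0$ for any prescribed small $\eta_0$; correspondingly $\abs{\cG_{\text{long}}}\ge 1-\eta_0$. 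Now for each long pair $(I_j,\rho_j)$ with $\abs{I_j}>3\delta$, \Cref{finite_transversality} applies with the \emph{fixed} collection of $k_\delta$ pairs of inverse branches in $\cH^{n_\delta}$, so \Cref{singlepair_decay} produces, after $n_b=C\ln|b|$ iterations, a family equivalent to $(\{(I_j,\rho_j)\})_{n_b}$ of total weight $\le e^{-\gamma}w_\cG(j)$. Summing the modified long pairs and leaving the iterated short pairs untouched, I obtain $\cG_{n_b}^*$ equivalent to $\cG_{n_b}$ with
\[
\abs{\cG_{n_b}^*}\le e^{-\gamma}\abs{\cG_{\text{long}}}+\abs{\cG_{\text{short}}}\le e^{-\gamma}(1-\eta_0)+\eta_0,
\]
using \Cref{invariance} ($\abs{\cG_n}=\abs{\cG}$) for the short part. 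Choosing $\eta_0$ small enough that $e^{-\gamma}(1-\eta_0)+\eta_0=:e^{-\gamma_1}<1$ with $\gamma_1<\gamma$ gives the claim; membership of $\cG_{n_b}^*$ in $\cM_{a,b,B,\ve_0}$ follows from \eqref{eq:mod_reg_new}--\eqref{eq:bd_of_mod} of \Cref{singlepair_decay} for the modified pairs, from \Cref{invariance} (and the remark after \Cref{iterated_growth_lemma}) for the untouched iterated pairs, and from summing \eqref{eq:bd_of_mod} against $w_\cG$ for the $\partial_\ve$-bound, provided $B$ was chosen large enough.

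I expect the main subtlety to be bookkeeping the boundary bound \eqref{eq:bd_def} for the assembled family $\cG_{n_b}^*$ rather than the weight inequality itself: \Cref{singlepair_decay} only asserts \eqref{eq:bd_of_mod} $\abs{\partial_\ve(\{(I_j,\rho_j)\})_{n_b}^*}\le C_*\abs{\partial_\ve(\{(I_j,\rho_j)\})_{n_b}}$ pair-by-pair, so one must sum these against $w_\cG(j)$, add the short-pair contribution, and invoke \Cref{iterated_growth_lemma} to dominate $\sum_j w_\cG(j)\abs{\partial_\ve(\{(I_j,\rho_j)\})_{n_b}}=\abs{\partial_\ve\cG_{n_b}}\le Ce^{\beta n_b}\abs{\partial_{e^{-\lambda n_b}\ve}\cG}+\bar C\ve$; since $\cG\in\cM_{a,b,B,\ve_0}$ gives $\abs{\partial_{e^{-\lambda n_b}\ve}\cG}\le Be^{-\lambda n_b}\ve$ and $\beta<\lambda$, this is $\le(CBe^{(\beta-\lambda)n_b}+\bar C)\ve$, which is $\le B\ve/C_*$ once $n_b$ (hence $|b|$, hence $b_0$) is large and $B$ is large. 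The other delicate point is ensuring a \emph{single} value of $n_b$ works uniformly: $n_\delta$ and $k_\delta$ are fixed once $\delta$ is fixed, and $\delta$ is fixed once $B$ (equivalently $\eta_0$) is fixed, so there is no circularity, and $n_b=C\ln|b|$ with $C$ large enough to dominate the $n_\delta$-dependent constants closes the argument.
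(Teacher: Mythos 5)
Your proposal is correct and follows essentially the same route as the paper: split the family into pairs with $\abs{I_j}$ above vs.\ below a width threshold, bound the short part's total weight by the $\abs{\partial_\ve\cG}\le B\ve$ condition (short pairs contribute their full unit mass to the $\delta$-boundary), apply \Cref{singlepair_decay} to each long pair, and sum. The only cosmetic differences are that the paper uses $\delta$ rather than $3\delta$ as the cut-off and phrases the final bound as $1-(1-e^{-\gamma})\cdot\tfrac34$ rather than $e^{-\gamma}(1-\eta_0)+\eta_0$; your additional bookkeeping of the $\partial_\ve$-bound for $\cG_{n_b}^*$ is a point the paper leaves implicit but is consistent with \eqref{eq:bd_of_mod} and \Cref{iterated_growth_lemma}.
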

\begin{proof}
Choose $\delta>0$ small enough that $B \delta < 1/4$. Then for large $m$, $\abs{\partial_\delta \cG_m} \leq B\delta <1/4$. It follows that the total weight of the standard pairs of $\cG_{m}$ that have width larger than $\delta$ is $L\geq 3/4$.  Let $S$ denote the total weight of standard pairs that have width $\leq \delta$. Note that $\abs{\cG}=\abs{\cG_m}=S+L = 1$.  Fix $\tilde n_b$ as in \Cref{singlepair_decay}. Using \Cref{singlepair_decay}, 
\[
\abs{\cG_{m+\tilde n_b}^*} \leq  S + e^{-\gamma} L \leq (S+L)-(1-e^{-\gamma})L \leq 1-(1-e^{-\gamma})3/4 =: e^{-\gamma_1}. 
\]
Take $n_b = m +\tilde n_b$.
\end{proof}

\section{Proofs of the main propositions} \label{dens_decay}
\begin{lem} \label{Lb_decay} There exists $C>0$,  $\gamma_2>0$, such that if $\{(I,\rho)\} \in \cM_{a,b,B,\ve_0}$ with $a,b,B$ sufficiently large and $\ve_0$ sufficiently small, then for every $n \in \bN$, 
\begin{equation}
\norm{  \sL_b^n \rho}_{\L^1  } \leq C e^{-\frac{\gamma_2}{\ln{|b|}} n}.
\end{equation}
\end{lem}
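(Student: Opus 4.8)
The plan is to iterate the weight-reduction of \Cref{family_decay} and then interpolate in $n$, exploiting that $\sL_b$ is a weak contraction of $\L^1$: by the change of variables $x\mapsto h(x)$ one has $\norm{\sL_b g}_{\L^1}\le\norm{g}_{\L^1}$ for all $g\in\L^1$. Three elementary observations set the stage. (i) By \eqref{eq:connection}, extended by linearity to standard families, $\sL_b^m\rho_\cG=\rho_{\cG_m}$ for every standard family $\cG$ and every $m$; hence equivalent families remain equivalent under iteration, and equivalence is transitive. (ii) $\norm{\rho_\cG}_{\L^1}\le\sum_j w_\cG(j)\norm{\rho_j}_{\L^1}=\abs{\cG}$, since each $\rho_j$ is normalized. (iii) Combining (i) and (ii): if $\lambda>0$ and $\tilde\cG$ is a standard family with total weight $\le\lambda$ such that $\tilde\cG$ is equivalent to $\cG_m$, then $\norm{\sL_b^m\rho}_{\L^1}=\norm{\rho_{\cG_m}}_{\L^1}=\norm{\rho_{\tilde\cG}}_{\L^1}\le\lambda$. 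So it suffices, for the singleton probability family $\cG=\{(I,\rho)\}$, to produce along a sequence of times growing like $\ln\abs{b}$ families equivalent to $\cG_m$ whose total weight decays geometrically.

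I would run the following induction (for $\abs{b}\ge b_0$, with $a,B$ large and $\ve_0$ small as in the previous sections). At stage $k$ we keep a time $T_k$, a scalar $\lambda_k>0$, and a standard \emph{probability} family $\cG^{(k)}\in\cM_{a,b,B,\ve_0}$ such that $\lambda_k\cG^{(k)}$ is equivalent to $\cG_{T_k}$; start from $T_0=0$, $\lambda_0=1$, $\cG^{(0)}=\cG$. Apply \Cref{family_decay} to $\cG^{(k)}$: it yields $n_b\le C\ln\abs{b}$ and a family $(\cG^{(k)})_{n_b}^*\in\cM_{a,b,B,\ve_0}$, equivalent to $(\cG^{(k)})_{n_b}$, of total weight $W_k\le e^{-\gamma_1}$. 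I also need a \emph{universal lower bound} $W_k\ge c_0>0$: this comes from inspecting the proofs of \Cref{singlepair_decay} and \Cref{family_decay}, since each modified pair retains at least a fixed fraction of its weight — the combined pieces $\bar\rho_{j*}$ have modulus $\ge\tfrac12 c$ by \eqref{eq:new_pairs}, and $\abs{\tilde\rho_j}=\abs{\rho_j}-c\ge\tfrac34\abs{\rho_j}$ because $c\le\tfrac12\inf\abs{\rho_j}$ — so one may take $c_0=3/4$. Set $\lambda_{k+1}:=\lambda_k W_k$. Normalising, $W_k^{-1}(\cG^{(k)})_{n_b}^*$ is a probability family, but its constant in \eqref{eq:bd_def} is only $\le W_k^{-1}B\le c_0^{-1}B$; by \Cref{iterated_growth_lemma} and the invariance remark following it, a \emph{fixed} number $m_*$ of further iterations — with $m_*$ depending on $B$, $c_0$ and the constants of \Cref{iterated_growth_lemma} but not on $b$ — brings it back into $\cM_{a,b,B,\ve_0}$ while preserving the total weight by \eqref{eq:total_weight_inv}; call the result $\cG^{(k+1)}$ and set $T_{k+1}:=T_k+n_b+m_*$. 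By observation (i), $\lambda_{k+1}\cG^{(k+1)}$ is equivalent to $\cG_{T_{k+1}}$; moreover $\lambda_k\le e^{-\gamma_1 k}$, and, absorbing $m_*$ into the logarithmic constant, $T_k\le k\,C'\ln\abs{b}$.

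By (iii), $\norm{\sL_b^{T_k}\rho}_{\L^1}\le\lambda_k\le e^{-\gamma_1 k}$. For an arbitrary $n\in\bN$, let $k$ be the largest index with $T_k\le n$; then $n<T_{k+1}\le(k+1)C'\ln\abs{b}$, so $k>\tfrac{n}{C'\ln\abs{b}}-1$, and the $\L^1$-contraction gives
\[
\norm{\sL_b^n\rho}_{\L^1}\le\norm{\sL_b^{\,n-T_k}}_{\L^1\to\L^1}\,\norm{\sL_b^{T_k}\rho}_{\L^1}\le e^{-\gamma_1 k}\le e^{\gamma_1}e^{-\frac{\gamma_1}{C'\ln\abs{b}}n},
\]
which is the claim with $C:=e^{\gamma_1}$ and $\gamma_2:=\gamma_1/C'$ (for the finitely many $n$ with $k=0$ the bound is trivial, since $\norm{\rho}_{\L^1}=1$ and then $Ce^{-\gamma_2 n/\ln\abs{b}}\ge 1$). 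The genuinely delicate point — and the only one — is the middle step: keeping the renormalised families uniformly inside $\cM_{a,b,B,\ve_0}$ across infinitely many stages without letting the boundary constant in \eqref{eq:bd_def} drift. This is exactly what the a priori lower bound $\abs{\cG_{n_b}^*}\ge c_0$ and the exponential forgetting of the boundary term in \Cref{iterated_growth_lemma} are used for.
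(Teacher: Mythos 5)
Your argument follows the same strategy as the paper's: iterate \Cref{family_decay} to force geometric decay of the total weight, control $\norm{\sL_b^n\rho}_{\L^1}$ by the total weight of an equivalent family via \eqref{eq:connection}, and interpolate in $n$ using the weak $\L^1$-contractivity of $\sL_b$. The paper phrases this as ``repeatedly applying \Cref{family_decay} and renormalizing the total weight at every step'' and then estimates as though each renormalized family automatically lies in $\cM_{a,b,B,\ve_0}$; you are right to flag that this is not automatic, since dividing by $W_k$ scales the constant in \eqref{eq:bd_def} up to $W_k^{-1}B$, and absent a correction the constant drifts after every round. Your fix --- a uniform lower bound $W_k\geq c_0>0$ together with a fixed number $m_*$ of buffer iterations, via \Cref{iterated_growth_lemma} and the invariance remark following it, to restore the constant to $B$ before reapplying \Cref{family_decay} --- is exactly what is needed and makes the induction watertight, with $m_*$ harmlessly absorbed into the $C'\ln\abs{b}$ count. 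One small quibble: the numerology supporting $c_0=3/4$ is off; from $c\leq\tfrac12\inf\abs{\rho_j}$ one only gets $\abs{\rho_j}-c\geq\tfrac12\abs{\rho_j}$, hence $\tilde w_j\geq w_j/2$, and the cleanest derivation of the lower bound is $W_k = S + \sum_{\text{long}} W_j^{\mathrm{single}} \geq S + L/2 \geq 1/2$ since each application of \Cref{singlepair_decay} shaves off at most half of a pair's weight. The precise value of $c_0$ is immaterial to your argument, though, so this does not affect correctness.
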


\begin{proof}
The result follows by repeatedly applying \Cref{family_decay} and renormalizing the total weight of the standard family at every step. Indeed, let $n_b=C\ln|b|$ as in \Cref{family_decay}.  After $k+1$ repetitions we have
\[
\begin{split}
\norm{  \sL_b^{(k+1)n_b} \rho}_{\L^1  } &\leq  \sum_{j_{k+1}\in \cJ_{k+1}} w_{\cG_{(k+1)n_b}^*}(j_{k+1}) \\
&\leq \sum_{j_k \in \cJ_k} e^{-\gamma_1} w_{\cG_{kn_b}^*}(j_k) \\
&\leq e^{-(k+1)\gamma_1} 
\text{.}
\end{split}
\]
This means, for every $m \in \bN$ ($m=kn_b+r_b$, $0 \leq r_b<n_b$),
\[
\begin{split}
\norm{ \sL_b^{m} \rho}_{\L^1  } &\leq \norm{ \sL_b^{r_b} \sL_b^{kn_b}\rho}_{\L^1  } \leq \norm{ \sL_0^{r_b}\abs{ \sL_b^{kn_b}\rho}}_{\L^1  } = \norm{ \sL_b^{kn_b}\rho}_{\L^1  } \\
&\leq e^{k\gamma_1} \leq e^{-(\frac{m}{n_b}-1) \gamma_1}\leq e^{\gamma_1}e^{-\frac{\gamma_1}{n_b}m} \leq C_{\gamma_1}e^{-\frac{\gamma_1}{n_b}m}.
\end{split}
\]
Note that here by $\sL_b^0\rho$, we mean $\rho$. Set $\gamma_2=\gamma_1/C$, where $n_b = C\ln|b|$, to conclude.  
\end{proof}

\begin{proof}[Proof of \Cref{main_estimate}] \label{prf_main_estimate}
We will show that there exists a constant $C$, such that for every $b \geq b_0$, for every $n \in \bN$, 
\begin{equation}
\norm{ \sL_b^n}_{\mixnorm} \leq Ce^{-\frac{\gamma_2}{\ln{|b|}} n}.
\end{equation}

Write $g=g - c + c$, where $c = 1+ |g|_\alpha/a+\sup|g|$ ($\abs{g}_\alpha$ being the $\alpha$-H\"older constant of $g$). Note that both $|c|$ and  $\abs{g-c}=1+|g|_\alpha/a+\sup{\abs{g}}-g$ are bounded below by $1$. Of course, $H(c)=0$. Calculating as in \eqref{eq:trick}, we get 
\[
\frac{\abs{g(x)-c}}{\abs{g(y)-c}} \leq \frac{\abs{g(x)-g(y)}}{\abs{g(y)-c}}+1 \leq \frac{\abs{g}_\alpha \abs{x-y}^\alpha}{\abs{g(y)-c}} +1 \leq e^\frac{\abs{g}_\alpha \abs{x-y}^\alpha}{\abs{g(y)-c}}.
\] By the choice of $c$, it follows that $\abs{g}_\alpha/(\abs{g(y)-c}) \leq  a$. Therefore, $H(g-c) \leq a$. 

Let $g_c=g-c$. Partition the domains of $g_c$ and $c$ into intervals of length $\ve_0>0$ (small enough as in \Cref{fixed_param}) and renormalize the restricted functions. Then $g_c=\sum_{j=1}^{L}  w_j \rho_j$ and $c=\sum_{k=1}^{L} \tilde w_k \tilde \rho_k$, where $L=\lceil\ve_0^{-1}\rceil$, $\{\rho_j\},\{\tilde\rho_k\}$ are standard families with parameters $a,b,B,\ve_0$ as defined before; and $\{w_j\}, \{\tilde w_k\}$ are their associated weights. Then $g = \sum w_j \rho_j + \sum \tilde w_k \tilde \rho_k$, where $\sum w_j+ \sum \tilde w_k = \norm{g_c}_{\L^1} + \norm{c}_{\L^1}$. Apply \Cref{Lb_decay} and obtain for $b \geq b_0$,  for every $n \in \bN$,
\[
\begin{split}
\norm{ \sL_b^n g}_{\L^1} &\leq \sum_{j=1}^L w_j \norm{ \sL_b^n \rho_j}_{\L^1} + \sum_{j=1}^L \tilde w_k \norm{ \sL_b^n \tilde \rho_k}_{\L^1} \\
&\leq C_{\gamma_1}e^{-\frac{\gamma_2}{\ln{|b|}} n}\left(\sum_{j=1}^L  w_j+ \sum_{j=1}^L  \tilde w_k \right) \\
&\leq C_{\gamma_1}e^{-\frac{\gamma_2}{\ln{|b|}} n}\left(\norm{g_c}_{\L^1}+\norm{c}_{\L^1}\right) \\
& \leq CC_{\gamma_1} e^{-\frac{\gamma_2}{\ln{|b|}} n} \norm{g}_{\sC^\alpha}
\end{split}
\]
\end{proof}

\begin{proof}[Proof of \Cref{small_b}] \label{prf_small_b}
It suffices to show that $\sL_b$ has spectral radius $e^{-r}<1$ when $b \neq 0$. By the assumptions on $\B$, we know that $\sL_b$ has essential spectral radius $e^{-r'}<1$ and spectral radius at most $1$. This means that there are only finitely many eigenvalues outside the disk of radius  $e^{-r'}$. Therefore, if we show that there are no eigenvalues on the unit circle, it follows that the spectral radius is strictly less than $1$. In turn, this implies the existence of $C$ such that $\norm{\sL^n_b}_{\B} < Ce^{-rn}$.

Let us show that there are no eigenvalues of $\sL_b$ on the unit circle for $b \neq 0$. Fix $b \geq b_0$ and suppose that there exists $g$ and $\lambda \in \bC$ satisfying $\abs{\lambda}=1$ such that
 $\sL_b g = \lambda g$. Since $\abs{\sL_b g} \leq \sL_0 \abs g$,  it follows that $\abs{g} \leq \sL_0 \abs{g}$. Since, $\int \abs{g} = \int \sL_0 \abs{g}$,  this implies that 
 $\abs{g} = \sL_0 \abs{g}$. Since $\abs{\lambda}=1$, this means that $
\abs{ \sL_{b} g  }  = \sL_0 \abs{g}$.

Using the definition of $\sL_b$  observe that, for every  $y$, the arguments of the complex numbers $g(x) e^{ib\tau(x)}$ must be equal for all $x$ such that $f(x)=y$. Choose some $k\in \bN$ such that $b k > b_0$, where $b_0$ is as in \eqref{eq:b_0}. The arguments of the complex numbers $g^k(x) e^{ibk\tau(x)}$ are equal for all  $f(x)=y$. This means that $ \abs{ \sL_{kb} g^k  } = \sL_0 \abs{g^k}$. It also means that  $ \abs{ \sL_{kb}^n g^k  } = \sL_0^n \abs{g^k}$ for any $n\in \bN$ (we could have considered the $n$-th power from the start of the argument).
 We have that 
 \[
  \int \abs{ \sL_{kb}^n g^k  } dm= \int  \sL_0^n \abs{g^k} dm= \int \abs{g^k} dm\quad \quad \text{for all $n\in \bN$}.
 \]
Using the estimate for large $|b|$, if $g \in \B$, then the left hand side vanishes as $n\to \infty$ whereas the right hand side is fixed and non-zero. 
\end{proof}


\bibliography{../Bibliography/Skew}

\end{document}